\newcommand{\eps}{\varepsilon}
\newcommand{\N}{{\mathbb N}}
\newcommand{\C}{{\mathbb C}}
\newcommand{\Z}{{\mathbb Z}}
\newcommand{\tef}{transcendental entire function}
\newcommand{\ef}{entire function}
\newcommand{\mconn}{multiply connected}
\theoremstyle{plain}
\newtheorem{theorem}{Theorem}[section]
\newtheorem{corollary}[theorem]{Corollary}
\newtheorem{lemma}[theorem]{Lemma}
\theoremstyle{definition}
\theoremstyle{remark}
\theoremstyle{problem}
\theoremstyle{example}
\begin{document}


\title[Annular itineraries for entire functions]{Annular itineraries for entire functions}

\author{P. J. Rippon}
\address{Department of Mathematics and Statistics \\
The Open University \\
   Walton Hall\\
   Milton Keynes MK7 6AA\\
   UK}
\email{p.j.rippon@open.ac.uk}

\author{G. M. Stallard}
\address{Department of Mathematics and Statistics \\
The Open University \\
   Walton Hall\\
   Milton Keynes MK7 6AA\\
   UK}
\email{g.m.stallard@open.ac.uk}

\thanks{2010 {\it Mathematics Subject Classification.}\; Primary 37F10, Secondary 30D05.\\Both authors were supported by the EPSRC grant EP/H006591/1.}




\begin{abstract}
In order to analyse the way in which the size of the iterates $(f^n(z))$ of a {\tef} $f$ can behave, we introduce the concept of the {\it annular itinerary} of a point $z$. This is the sequence of non-negative integers $s_0s_1\ldots$ defined by
\[
f^n(z)\in A_{s_n}(R),\;\;\text{for }n\ge 0,
\]
where $A_0(R)=\{z:|z|<R\}$ and
\[
A_n(R)=\{z:M^{n-1}(R)\le |z|<M^n(R)\},\;\;n\ge 1.
\]
Here $M(r)$ is the maximum modulus of $f$ and $R>0$ is so large that $M(r)>r$, for $r\ge R$.

We consider the different types of annular itineraries that can occur for any {\tef} $f$ and show that it is always possible to find points with various types of prescribed annular itineraries. The proofs use two new annuli covering results that are of wider interest.
\end{abstract}

\maketitle

\section{Introduction}
\setcounter{equation}{0} Let $f:\C\to \C$ be a {\tef} and denote by
$f^{n},\,n=0,1,2,\ldots\,$, the $n$th iterate of~$f$. The {\it Fatou set} $F(f)$ is
the set of points $z \in \C$ such that $(f^{n})_{n \in \N}$ forms a normal family in
some neighborhood of $z$.  The complement of $F(f)$ is called the {\it Julia set}
$J(f)$ of $f$. The {\it escaping set\,} $I(f)$ of $f$ is defined to be
\[
I(f)=\{z:f^n(z)\to \infty\;\text{ as }n\to\infty\},
\]
and it is known~\cite{E} that we always have $J(f)=\partial I(f)$. An introduction to the properties of these sets can be found in~\cite{wB93}.

One technique that has often been used to understand the nature of the above sets is `symbolic dynamics'. After partitioning $\C$ in some manner and labelling each subset in the partition by a different symbol, we obtain the `itinerary' of a point $z\in\C$ by writing down the sequence of symbols corresponding to the successive subsets of the partition visited by the iterates $(f^n(z))$ of $z$.

This technique was used, for example, in \cite{DT86} to show that, for certain entire functions, the sets $J(f)$ and $I(f)$ contain so-called `Cantor bouquets' of curves, by studying those points with common itineraries with respect to certain natural partitions. This approach was developed and refined in \cite{RRRS} to prove that for a large class of {\ef}s, including all functions of finite order in the much-studied Eremenko-Lyubich class ${\mathcal B}$ (see \cite{EL92}), all points of $I(f)$ can be joined to~$\infty$ by a curve in $I(f)$, a result that is related to a conjecture of Eremenko in \cite{E}.

Symbolic dynamics was also used in \cite{jO12} in order to analyse the dynamical
properties of the components of the complement of the {\it fast escaping set} $A(f)$
for functions~$f$ such that the set $A_R(f)$ has the structure of a `spider's web',
in particular, to show that in this case there are uncountably many components of
$A(f)^c$ of various types. Here
\[
A_R(f) = \{z: |f^{n}(z)| \geq M^n(R), \text{ for } n \in \N\}\quad\text{and}\quad A(f)=\bigcup_{\ell\in\N}f^{-\ell}(A_R(f)),
\]
where $M(r)=M(r,f) = \max_{|z|=r} |f(z)|,\; r > 0$, and $R>0$ is any value such that
$M(r)>r$ for $r\geq R$. See~\cite{RS09a} for the detailed properties of $A(f)$ and
the many families of functions for which $A_R(f)$ is a spider's web. Note that this spider's web
structure cannot occur for functions in the class ${\mathcal B}$ so the results in
\cite{jO12} are complementary to those in \cite{DT86} and \cite{RRRS}.

Here we introduce a new type of partition of the plane which is appropriate for
analysing a key aspect of the dynamical behaviour of {\it any} {\tef}~$f$, namely,
the possible ways in which the size of the iterates $f^n(z)$, $n\ge 0$, can vary. We
define this partition as follows: $A_0(R)=\{z:|z|<R\}$ and
\[
A_n(R)=\{z:M^{n-1}(R)\le |z|<M^n(R)\},\;\;n\ge 1,
\]
where $R>0$ is any value such that $M(r)>r$ for $r\geq R$. Then, for any $z\in\C$, the sequence of non-negative integers $s_0s_1\ldots$ defined by
\begin{equation}\label{itin-def}
f^n(z)\in A_{s_n}(R),\;\;\text{for }n\ge 0,
\end{equation}
will be called the {\it annular itinerary} of $z$, with respect to the partition $\{A_n(R)\}$.

Note that for any annular itinerary $s_0s_1\ldots$ we have $s_{n+1}\le s_n+1$ for $n\in\N$ by the maximum principle. Also, if the value $R$ changes to $R'$, then the annular itinerary, $s_0s_1\ldots$ say, of a point changes to $s'_0s'_1\ldots$ where, for some integer $p$,
\[
s'_n-s_n\in\{p,p+1\},\;\;\text{for }n\ge 0.
\]

Our first result enables us to construct points with many different types of annular itineraries.
\begin{theorem}\label{annuli}
Let $f$ be a {\tef}. There exists $R=R(f)>0$ and a sequence of closed annuli
\[
B_n=\{z:r_n\le |z|\le r'_n\},\;\;n\ge 0,
\]
each of which meets $J(f)$, such that
\begin{equation}\label{prop1}
B_n\subset A_n(R),\;\;\text{for } n\ge 0,
\end{equation}
\begin{equation}\label{prop2}
f(B_n)\supset B_{n+1},\;\;\text{for } n\ge 0,
\end{equation}
and there is a subsequence $B_{n_j}$ such that, for $j\in\N$,
\begin{equation}\label{prop3}
f(B_{n_j})\supset B_{n},\;\; \text{for }0\le n\le n_j, \text{ with at most one exception.}
\end{equation}

Moreover, if $f$ has a {\mconn} Fatou component, then the annuli $B_n$ can be chosen in such a way that \eqref{prop3} holds with no exceptions and the sequence $(n_j)$ consists of those~$n\ge 0$ such that $A_n(R)$ contains a zero of $f$.
\end{theorem}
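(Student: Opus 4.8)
The plan is to build the annuli $B_n$ by a compactness/covering argument, exploiting two facts: that the maximum modulus function $M(r)$ grows, so that on each circle of radius $r$ there are points whose image has modulus $M(r)$; and that $J(f)$ is completely invariant and unbounded, so the annuli $A_n(R)$ all meet $J(f)$ for large enough $R$. I would first fix $R$ large enough that $M(r)>r$ for $r\ge R$ and that $J(f)\cap A_n(R)\neq\emptyset$ for all $n\ge 0$ (the latter holds automatically once $R$ exceeds, say, the modulus of a repelling fixed point, since then every $A_n(R)$ is nonempty and $J(f)$ is unbounded and backward invariant). The core of the construction is a covering lemma of the shape: if $D$ is a domain whose image $f(D)$ contains an annulus $\{r\le|z|\le M(r)\}$ — which will be the case whenever $D$ contains a full circle $|z|=r$ with $r\ge R$, by the maximum principle together with the fact that $f(\{|z|=r\})$ winds around $0$ — then one can find a subannulus $B\subset D$ mapping over the next annulus. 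I expect Theorem~\ref{annuli} (the first part) to be proved earlier in the paper using exactly such a result, so here I focus on the final \emph{Moreover} clause.

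For the \emph{Moreover} statement, I would use the hypothesis that $f$ has a {\mconn} Fatou component $U$. By the work of Baker and subsequent authors, $U$ is then a {\bwd}: its iterates $U_k=f^k(U)$ eventually surround $0$, contain round annuli $\{z:\rho_k\le|z|\le\sigma_k\}$ with $\sigma_k/\rho_k\to\infty$, and in fact $f(\{|z|=r\})$ surrounds $0$ for all large $r$. The key structural point is that each such round subannulus of $U_k$, being mapped by $f$, covers a round subannulus of $U_{k+1}$ whose inner and outer radii are controlled by $M(\cdot)$ and $m(\cdot)=\min_{|z|=r}|f(z)|$; and crucially, because $U$ is a wandering domain lying in the Fatou set while each $B_n$ must meet $J(f)$, the annuli $B_n$ should be taken \emph{not} inside $U$ but as round annuli in $A_n(R)$ chosen so that $f$ maps them fully across a long range of lower-indexed annuli. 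The presence of a {\mconn} component forces $M(r)$ to grow so fast (indeed $\log M(r)/\log r\to\infty$ along a sequence, and more) that the image $f(B_n)$ of a suitably chosen round annulus $B_n\subset A_n(R)$ contains \emph{all} of $A_0(R),\dots,A_{n}(R)$ whenever $A_n(R)$ contains a zero of $f$: the zero guarantees $m(r)$ can be taken arbitrarily small (in fact $0$) on a circle inside $B_n$, so $f(B_n)$ reaches down to $0$, while the maximum principle gives the outer reach up to $M^{n+1}(R)$, hence over $B_{n+1}$ as well. This is how one gets \eqref{prop3} with no exceptions and with $(n_j)$ exactly the indices $n$ for which $A_n(R)$ contains a zero.

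Concretely, the steps I would carry out are: (1) invoke the first part of the theorem, or re-run its covering argument, to obtain candidate round annuli $B_n\subset A_n(R)$ with $f(B_n)\supset B_{n+1}$ and each $B_n$ meeting $J(f)$; (2) use the existence of the {\mconn} Fatou component to show $M$ grows fast enough — quantitatively, that for each $n$ with a zero of $f$ in $A_n(R)$ one can position a circle $C_n\subset A_n(R)$ passing near that zero so that $f(C_n)$ is a curve winding around $0$ with $\min|f|$ on $C_n$ as small as desired and $\max|f|$ on $C_n$ at least $M^{n+1}(R)$; (3) extract from $C_n$ a round annulus $B_n$ with $f(B_n)\supset\{z:|z|< M^{n+1}(R)\}\supset A_0(R)\cup\dots\cup A_{n+1}(R)$, which delivers \eqref{prop3} with no exceptions; (4) check $B_n$ still meets $J(f)$, using that $J(f)$ separates any neighbourhood of a zero lying on $J(f)$, or by adjusting $B_n$ slightly since $J(f)\cap A_n(R)$ is nonempty and $B_n$ can be taken as a thin annulus containing a chosen point of $J(f)$; and (5) identify $(n_j)$ with the zero-indices and note this set is infinite because $f$ is transcendental (so has infinitely many zeros unless $f=e^g$, but a function of the form $e^g$ with a {\mconn} component still has the requisite fast growth and one argues slightly differently — this edge case may need care).

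The main obstacle I anticipate is step (2)–(3): turning "$f$ has a {\mconn} Fatou component" into the precise quantitative statement that $f(B_n)$ reaches all the way from near $0$ out past $M^{n+1}(R)$ for the \emph{right} annuli $B_n$. The mechanism is that Baker wandering domains force $\log M(2r)/\log M(r)$ (or a comparable quantity) to be large infinitely often — essentially Baker's growth condition — but pinning the covering down to happen \emph{exactly} at the annuli containing zeros, rather than at some shifted or coarser sequence, will require the covering lemma to be applied with care to circles passing through (or very near) the zeros of $f$, combined with a Wiman–Valiron or $\log M$-convexity estimate to guarantee the outer radius of the image annulus is at least $M^{n+1}(R)$. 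Reconciling the "meets $J(f)$" requirement with annuli that are essentially forced to sit near zeros (which may lie in $F(f)$, inside the multiply connected component) is the subtle interface: one likely resolves it by taking $B_n$ to be a genuinely round annulus of $A_n(R)$ — large enough to meet $J(f)$, which fills out $A_n(R)$ in the sense of separating it — rather than a thin annulus hugging a zero, and then verifying the covering property still holds for the fatter annulus by the maximum/minimum modulus principles.
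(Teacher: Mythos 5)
Your proposal has a genuine gap, and it also side-steps most of the work. You assume the first part of the theorem is ``proved earlier in the paper'' — but it is not; the first part is proved only in Sections~3 and~4, via two substantial covering lemmas (a hyperbolic-metric/Pick's-theorem estimate for when the minimum modulus is small, and a Harnack's-inequality estimate for when it is large), plus, in the no-multiply-connected case, a delicate iteration that switches between the two. So the core technical content of the theorem is not addressed at all.

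For the \emph{Moreover} clause, which you do attack, the reasoning does not close. You argue: position a circle $C_n\subset A_n(R)$ near a zero so that $\min|f|$ on $C_n$ is small and $\max|f|$ on $C_n$ is at least $M^{n+1}(R)$, then conclude $f(B_n)\supset\{z:|z|<M^{n+1}(R)\}$. But knowing that $f(C_n)$ is a closed curve reaching from near $0$ out to $M^{n+1}(R)$ does \emph{not} give you that the image of a fattened annulus around $C_n$ covers the full disc; the image can reach down and reach out while still missing large regions, and you supply no degree or winding argument to rule this out. You also correctly sense the second problem — a thin annulus hugging a zero will typically lie in $F(f)$ and fail to meet $J(f)$ — but your proposed fix (``take $B_n$ large enough to meet $J(f)$ and verify the covering still holds by max/min modulus'') is circular: the max/min modulus principles were exactly what was insufficient. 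The paper resolves both difficulties simultaneously with a different construction. It does \emph{not} build $B_n$ near zeros. Instead (Lemma~\ref{absorb}, Theorem~\ref{annuli-MC}) it produces a nested chain of \emph{absorbing} annuli $A'_n\subset U_n$ inside \emph{distinct} multiply connected Fatou components, with $f(A'_n)\subset A'_{n+1}$, and defines $B_n$ to be the closed annulus separating $A'_n$ from $A'_{n+1}$. Since $U_n\ne U_{n+1}$, $B_n$ automatically meets $J(f)$ — no positioning near zeros is required. The absorption gives $f(\partial B'_n)\subset B'_{n+1}$ for $B'_n=A'_n\cup B_n\cup A'_{n+1}$, and then a short topological dichotomy (for a domain whose boundary image lies in an annulus, the image is either inside the annulus or covers the bounded complementary component) yields $f(B_n)\supset F_{n+1}$ exactly when $B'_n$ contains a zero. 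Montel's theorem shows this must happen infinitely often. That topological step — boundary maps into the target annulus, so either the image stays inside or it covers everything the boundary surrounds — is precisely the ingredient missing from your argument, and it cannot be replaced by max/min modulus estimates alone.
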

{\it Remark}\;\;It is clear that the value $R=R(f)$ in Theorem~\ref{annuli} can be chosen to be arbitrarily large.

Theorem~\ref{annuli} gives the following sufficient condition for a sequence $s_0s_1 \ldots$ to be an annular itinerary.
\begin{theorem}\label{itin}
Let $f$ be a {\tef}. There exist $R=R(f)>0$, a sequence $n_j$, $j\in\N$, of positive integers with $n_j\to\infty$ as $j\to\infty$, and a sequence of sets $I_j$, $j\in\N$, where $I_j\subset\{0,\ldots,s_{n_j}\}$ has at most one element, such that:

if $s_0s_1\ldots$ is any sequence of non-negative integers with the property that
\begin{align}\label{itin-prop}
&\text{for }  n\ge 0, \text{ we have } s_{n+1}= s_n+1 \text{ or, in the case when } n=n_j \text{ for some } \\
& j\in\N, \;s_{{n_j}+1}\in \{0,\ldots,s_{n_j}\}\setminus I_j,\notag
\end{align}
then there exists $\zeta\in J(f)$ with annular itinerary $s_0s_1\ldots$ with respect to $\{A_n(R)\}$.

Moreover, if $f$ has a {\mconn} Fatou component, then $I_j=\emptyset$, for $j\in \N$, and the sequence $(n_j)$ consists of those~$n\ge 0$ such that $A_n(R)$ contains a zero of $f$.
\end{theorem}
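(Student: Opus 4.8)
The plan is to deduce Theorem~\ref{itin} from Theorem~\ref{annuli} by a nested-preimage construction on the annuli $B_n$. First I would apply Theorem~\ref{annuli} to fix $R=R(f)$, the closed annuli $B_n\subset A_n(R)$, each meeting $J(f)$, with $f(B_n)\supseteq B_{n+1}$ for all $n\ge0$, the subsequence $(n_j)$ --- which, being a subsequence of the indices, satisfies $n_j\to\infty$ --- and, for each $j$, the at-most-one-element set $I_j\subset\{0,\dots,n_j\}$ of indices excepted in \eqref{prop3}, so that $f(B_{n_j})\supseteq B_n$ whenever $n\in\{0,\dots,n_j\}\setminus I_j$. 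These $R$, $(n_j)$ and $(I_j)$ are taken to be the data whose existence is asserted by Theorem~\ref{itin}; in the {\mconn} case Theorem~\ref{annuli} gives $I_j=\emptyset$ and identifies $(n_j)$ with the sequence of $n$ for which $A_n(R)$ contains a zero of $f$, which is precisely the required additional conclusion.

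Given a sequence $s_0s_1\ldots$ of non-negative integers satisfying \eqref{itin-prop}, I would set $C_n:=B_{s_n}$ and try to verify the covering chain $f(C_n)\supseteq C_{n+1}$ for every $n\ge0$. When $s_{n+1}=s_n+1$ this is exactly \eqref{prop2} applied with index $s_n$. When $n$ is an index at which \eqref{itin-prop} permits $s_{n+1}$ to take a smaller value, the inclusion should reduce to \eqref{prop3}, but only once one has checked that the annulus in play at step $n$ is genuinely the full-covering annulus $B_{n_j}$ --- that is, that the $+1$ steps forced at all earlier non-special indices have brought $s_n$ up to equal $n_j$ (possibly after the harmless normalisation $s_0=0$ afforded by the freedom in the choice of $R$). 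I expect this index bookkeeping --- matching the ``admissible drop'' positions encoded in \eqref{itin-prop} to the specific annuli $B_{n_j}$ for which \eqref{prop3} actually supplies an image large enough to cover $C_{n+1}$ --- to be the one genuinely delicate point of the proof.

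Once the chain $f(C_n)\supseteq C_{n+1}$ is in place, the remainder is routine. Set $E_k=\{z\in C_0\cap J(f):f^i(z)\in C_i\text{ for }0\le i\le k\}$. Each $C_k=B_{s_k}$ is a non-empty compact annulus meeting $J(f)$ (by Theorem~\ref{annuli}), so, starting from a point of $C_k\cap J(f)$ and pulling it back successively through $f$ along $C_{k-1},\dots,C_0$ --- using that $f$ maps $C_m$ onto a set containing $C_{m+1}$, together with the complete invariance $J(f)=f^{-1}(J(f))$ to keep every preimage in $J(f)$ --- one obtains $E_k\neq\emptyset$. The sets $E_k$ are compact and decreasing in $k$, hence $\bigcap_{k\ge0}E_k\neq\emptyset$; any $\zeta$ in this intersection lies in $J(f)$ and satisfies $f^n(\zeta)\in C_n=B_{s_n}\subset A_{s_n}(R)$ for all $n\ge0$, so $\zeta$ has annular itinerary $s_0s_1\ldots$ with respect to $\{A_n(R)\}$. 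The ``moreover'' clause is then immediate from the {\mconn} identifications recorded in the first step.
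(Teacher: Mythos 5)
Your overall architecture matches the paper's: invoke Theorem~\ref{annuli} for the annuli $B_n$ and the data $(R,(n_j),(I_j))$, set $C_n=B_{s_n}$, verify the covering chain $f(C_n)\supset C_{n+1}$, and extract a point of $J(f)$ by nested compact sets (the paper cites Lemma~\ref{top}, which is precisely the argument you re-derive). But the ``bookkeeping'' you flagged and then deferred is a genuine hole, and the normalisation $s_0=0$ you suggest does not close it. Under the literal reading of \eqref{itin-prop} the drop occurs at the \emph{time step} $n=n_j$, so you would need $f(B_{s_{n_j}})\supset B_{s_{n_j+1}}$ for an arbitrary $s_{n_j+1}\le s_{n_j}$; but \eqref{prop3} supplies full covering only from the specific annulus $B_{n_j}$, which forces $s_{n_j}=n_j$. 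Setting $s_0=0$ synchronises $s_n$ with $n$ only until the first drop: if, say, $n_1=5$ and you drop to $s_6=2$, then $s_{10}=6\ne 10=n_2$, and neither \eqref{prop2} nor \eqref{prop3} lets you drop from $B_6$ at that step. So the covering chain breaks.

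The resolution --- which the paper's one-line justification presupposes rather than states --- is that the admissibility of a drop in \eqref{itin-prop} is a condition on the current \emph{annulus index} $s_n$, not on the time step $n$: a drop at step $n$ is permitted exactly when $s_n=n_j$ for some $j$. With that reading $C_n=B_{s_n}=B_{n_j}$ at every drop step, \eqref{prop3} applies directly, and there is no bookkeeping to do. This is also the only reading consistent with the ``moreover'' clause (which identifies $(n_j)$ as a set of annulus indices, namely those $n$ for which $A_n(R)$ contains a zero of $f$) and with the proof of Corollary~\ref{itin-types}(d), where $N_j=\max\{n:s_n=n_j\}$ appears explicitly. You should state this interpretation and use it, rather than leave the crucial step as a conjectured delicacy.
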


Theorem~\ref{itin} is related to results in~\cite[Theorem~1.2]{jO12}, where it is
assumed that $A_R(f)$ is a spider's web. It implies that for any {\tef} there exist
many types of prescribed annular itineraries, including those in the following result. Note that a version of Corollary~\ref{itin-types} part~(d) was proved in
\cite[Theorem~1]{RS08c}.

\begin{corollary}\label{itin-types}
Let $f$ be a {\tef}, let $R=R(f)>0$ be the value given by Theorem~\ref{itin} and let $s\in \N$. Then there exist points with the following types of annular itineraries $(s_n)$ with respect to $\{A_n(R)\}$:
\begin{itemize}
\item[(a)] periodic itineraries of all periods with at most one exception such that $s_n\ge s$ for $n\ge 0$;
\item[(b)] uncountably many itineraries such that $(s_n)$ is bounded and $s_n\ge s$ for $n\ge 0$;
\item[(c)] uncountably many itineraries such that $(s_n)$ is unbounded but does not tend to $\infty$, and $s_n\ge s$ for $n\ge 0$;
\item[(d)] uncountably many itineraries such that $s_n\to \infty$ as $n\to\infty$ more slowly than at any given rate.
\end{itemize}
\end{corollary}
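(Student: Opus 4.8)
The plan is to deduce each of (a)--(d) by exhibiting, in each case, a sequence $(s_n)$ of non-negative integers that satisfies the structural condition \eqref{itin-prop} of Theorem~\ref{itin}, and then invoking that theorem to produce a point $\zeta\in J(f)$ with that annular itinerary. Throughout, we fix $R=R(f)$, the sequence $(n_j)$, and the singleton-or-empty sets $I_j\subset\{0,\dots,s_{n_j}\}$ given by Theorem~\ref{itin}; recall that between consecutive ``free'' indices $n_j$ the itinerary is forced to increase by exactly $1$ at each step, while at each $n=n_j$ we are allowed to reset $s_{n_j+1}$ to any value in $\{0,\dots,s_{n_j}\}\setminus I_j$. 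The key observation is that, because the $n_j$ form an infinite increasing sequence, we have infinitely many chances to ``drop back down'', and the gaps $n_{j+1}-n_j$ between resets tell us exactly how far $(s_n)$ is forced to climb before the next reset; by choosing the reset values appropriately we can realise bounded, unbounded-but-not-escaping, or slowly-escaping behaviour. First I would note the trivial normalisation: since $R$ can be taken as large as we like (see the Remark after Theorem~\ref{annuli}), and since increasing $s$ just shifts all indices, it suffices to build itineraries with $s_n\ge s$; concretely, whenever a reset to value $v$ is ``needed'' below, I reset instead to $\max\{v,s\}$, which is still admissible provided $s\le s_{n_j}$, and this holds for all large $j$ since the forced increases guarantee $s_{n_j}\to\infty$.

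For part~(d), given a rate, i.e.\ a sequence $m_k\to\infty$, I would define $(s_n)$ by choosing the reset at each $n_j$ to be (roughly) $\max\{s,\,s_{n_j}-c_j\}$ for a carefully chosen sequence $c_j$ of ``drops'': between $n_j$ and $n_{j+1}$ the value climbs by $n_{j+1}-n_j$, so $s_{n_{j+1}}= s_{n_j}-c_j+(n_{j+1}-n_j)$, a recursion one solves to keep $s_n\to\infty$ but as slowly as desired by making the net increase per block, $(n_{j+1}-n_j)-c_j$, positive but tiny on average (e.g.\ equal to $1$ on a sparse set of blocks and $0$ otherwise), which drags $s_n$ below $m_k$ at stage $k$. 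One must check $c_j\le s_{n_j}$ so the reset is legal and stays $\ge s$; this is where one uses that $s_{n_j}$ has already grown large. For part~(c), similarly reset at each $n_j$ to $\max\{s, s_{n_j}-(n_{j+1}-n_j)+1\}$ on a sparse subsequence of $j$'s so that $(s_n)$ returns near the bottom infinitely often (hence does not tend to $\infty$) but on the complementary blocks reset to keep the value near its current level, letting it drift up along a subsequence (so $(s_n)$ is unbounded). For part~(b), reset at \emph{every} $n_j$ back to $\max\{s, s_{n_j}-(n_{j+1}-n_j)+1\}$, so $(s_n)$ stays within a bounded band just above $s$; and for the \emph{uncountably many} clause in (b) and (c), use that at each reset the admissible set $\{0,\dots,s_{n_j}\}\setminus I_j$ has at least two elements once $s_{n_j}\ge 2$, so along an infinite subsequence of resets we have a genuine binary choice, yielding $2^{\aleph_0}$ distinct itineraries all of the prescribed type, and distinct itineraries give distinct points.

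For part~(a), the periodic case, I would pick a period $p$ and try to construct an itinerary $(s_n)$ that is eventually periodic with period $p$ and then note that Theorem~\ref{itin} (applied with a suitably large $R$, or after discarding an initial segment) gives a point whose itinerary is exactly periodic. The block structure makes periodicity delicate because the reset indices $n_j$ need not themselves be periodic; but by the Remark one may enlarge $R$ so that the first reset index is as large as we wish, and more importantly one can realise periodicity by the following device: choose the period to be a value $p=n_{j+1}-n_j+\cdots$ compatible with a run of block lengths, or, more robustly, observe that over one ``super-block'' from $n_j$ to $n_{j+L}$ (for suitable $L$) the net displacement of $(s_n)$ can be made exactly $0$ by choosing drops summing to the total climb, and then repeat the \emph{same} pattern of resets on each successive super-block of identical combinatorial type --- the single unavoidable exception to ``all periods'' being the period that is incompatible with the arithmetic of the available block lengths (this matches the ``at most one exception'' in the statement). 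I expect \textbf{part~(a) to be the main obstacle}: controlling exact periodicity requires simultaneously controlling the (a priori irregular) spacing of the free indices $n_j$ and the reset values, whereas parts (b)--(d) only require crude one-sided control of the drift of $(s_n)$ and so follow from essentially the same bookkeeping argument.
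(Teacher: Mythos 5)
The crux of the issue is your reading of the structural condition \eqref{itin-prop}: you interpret the indices $n_j$ as \emph{times} at which the itinerary may reset. But the proof of Theorem~\ref{itin} builds the compact sets $E_n = B_{s_n}$ and needs $f(B_{s_n})\supset B_{s_{n+1}}$, which \eqref{prop4-c} supplies only when the current \emph{annulus index} $s_n$ equals some $n_j$. (Indeed the sets $I_j$ are fixed in advance and lie in $\{0,\dots,n_j\}$, so the ``$s_{n_j}$'' appearing in the statement of Theorem~\ref{itin} is really $n_j$.) Thus resets are permitted whenever the orbit sits in an annulus $B_{n_j}$, not at pre-selected times. This changes the combinatorics completely. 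Under your time-based reading, part~(b) cannot hold in general: between consecutive reset times the itinerary is forced to climb by $n_{j+1}-n_j-1$, and if those time gaps are unbounded, no bounded itinerary exists; under the annulus-based reading one simply fixes a single $j$ with $n_j\ge s+2$ and lets the orbit oscillate between $n_j$ and a value in $\{s,s+1,s+2\}\setminus I_j$. This is exactly the paper's argument, and the binary choice here is also the source of uncountability in (b) and (c), so that part of your argument matches.

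Your worry that part~(a) is the main obstacle is likewise misplaced. With the correct reading, a period-$p$ itinerary with $s_n\ge s$ is immediate: choose any $j$ with $n_j\ge s+p-1$ and $n_j-p+1\notin I_j$, and cycle $n_j-p+1,\dots,n_j,n_j-p+1,\dots$. Since $|I_j|\le 1$, at most one value of $p$ can be excluded for all admissible $j$ simultaneously, which accounts for the ``at most one exception''. No super-block or eventual-periodicity device is needed.

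Finally, for part~(d) the paper's key tool, which your proposal lacks, is the relation $f^2(B_{n_j})\supset B_{n_j}$ of \eqref{B-2}, a direct consequence of \eqref{prop3-c} and \eqref{prop4-c}. This lets the orbit linger in $B_{n_j}$ (either fixed there or oscillating with $B_{n_j-1}$) for arbitrarily long before moving up to $n_{j+1}$. The paper then picks the itinerary so that $N_j=\max\{n:s_n=n_j\}$ satisfies $a_{N_j}\ge M^{n_{j+1}}(R)$, yielding $|f^n(\zeta)|\le M^{s_n}(R)\le a_n$ for $n\ge N_1$, with uncountability again coming from the infinitely many admissible choices of each $N_j$. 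Your drop-bookkeeping scheme controls only the net drift per block, not the maximum within a block, so in a long block the itinerary could overshoot the prescribed rate; the ``hang around'' mechanism avoids this entirely.
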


By definition a point is in $A(f)$ if and only if its itinerary $s_0s_1\ldots$ with respect to any $\{A_n(R)\}$ satisfies $s_{n+1}=s_n+1$ for sufficiently large~$n$. The existence of such points is well known (see \cite{BH99} or \cite{RS09a}) and also follows from Theorem~\ref{itin}. There are clearly only countably many such itineraries. On the other hand, it follows from our final result that for any given rate of escape that is less than that of `fast escape' there exist uncountably many itineraries for which the corresponding points escape at least at this given rate and in some sense at no faster rate.

\begin{theorem}\label{prescribed}
Let $f$ be a {\tef}. There exists $R_0=R_0(f)>0$ with the property that whenever $(a_n)$ is a positive sequence such that
\begin{equation}\label{aprop-a}
a_n\ge R_0\;\;\text{and}\;\;a_{n+1}\le M(a_n),\;\;\text{for }n\ge 0,
\end{equation}
there exists a point $\zeta\in J(f)$ and a sequence $(n_j)$ with $n_j\to\infty$ as $j\to \infty$ such that
\begin{equation}\label{presc}
|f^n(\zeta)| \geq a_n, \mbox{ for } n \ge 0, \;\mbox{ and } |f^{n_j}(\zeta)| \le M^2(a_{n_j}), \mbox{ for } j \in \N.
\end{equation}

If, in addition,
\begin{equation}\label{notfast}
\text{for each }\ell\in\N \;\text{there exists }n(\ell)\in\N \;\text{such that } a_{n(\ell)+\ell}<M^{n(\ell)}(R_0),
\end{equation}
then there are uncountably many itineraries with respect to $\{A_n(R_0)\}$ that correspond to points $\zeta$ satisfying~\eqref{presc}.
\end{theorem}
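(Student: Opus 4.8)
The plan is to deduce Theorem~\ref{prescribed} from Theorem~\ref{itin} by converting the prescribed sequence $(a_n)$ into a suitable annular itinerary. Set $R_0=R(f)$, the constant given by Theorem~\ref{itin}, with associated sequence $(n_j)$ and sets $I_j$. Given $(a_n)$ satisfying \eqref{aprop-a}, define for each $n\ge 0$ the integer $\ell_n\ge 1$ by $a_n\in A_{\ell_n}(R_0)$, that is, $M^{\ell_n-1}(R_0)\le a_n<M^{\ell_n}(R_0)$; then $a_{n+1}\le M(a_n)<M^{\ell_n+1}(R_0)$, so $\ell_{n+1}\le\ell_n+1$. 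The two elementary facts behind everything are: if $f^n(\zeta)\in A_{t_n}(R_0)$ with $t_n\ge\ell_n+1$, then $|f^n(\zeta)|\ge M^{\ell_n}(R_0)>a_n$; and if moreover $t_n=\ell_n+1$, then $|f^n(\zeta)|<M^{\ell_n+1}(R_0)\le M^2(a_n)$, using $a_n\ge M^{\ell_n-1}(R_0)$.

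Thus it suffices to construct a sequence $(t_n)$ of non-negative integers satisfying the hypothesis \eqref{itin-prop} of Theorem~\ref{itin}, with $t_n\ge\ell_n+1$ for every $n\ge 0$ and $t_n=\ell_n+1$ for infinitely many $n$; Theorem~\ref{itin} then yields $\zeta\in J(f)$ with annular itinerary $(t_n)$, and \eqref{presc} follows, with the sequence in \eqref{presc} taken to be $\{n:t_n=\ell_n+1\}$. I would build $(t_n)$ greedily: away from the times $n_j$ put $t_{n+1}=t_n+1$ (as \eqref{itin-prop} demands), noting that along such a run the ``gap'' $t_n-\ell_n$ is non-decreasing and strictly increases whenever $\ell$ fails to increase by $1$; at a time $n_j$, either keep climbing or, when the gap is at least $2$ (so $t_{n_j}\ge\ell_{n_j}+2\ge\ell_{n_j+1}+1$), drop to $t_{n_j+1}=\ell_{n_j+1}+1$. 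Such a drop is admissible in \eqref{itin-prop} provided $\ell_{n_j+1}+1\notin I_j$; since $\ell_{n_j+1}+1\ge 2$ and each $I_j$ contains at most one index --- and, in the cases that occur, only the index $0$, associated with the central disc $A_0(R_0)$ --- this causes no trouble, so ``hugs'' can always be performed. After a hug the gap is $1$ and stays $1$ until $\ell$ next fails to increase by $1$; hence either the gap is eventually $1$ for ever, so $t_n=\ell_n+1$ for all large $n$, or it exceeds $1$ infinitely often, and one then performs another hug at the next available $n_j$. In both cases $t_n=\ell_n+1$ infinitely often while $t_n\ge\ell_n+1$ throughout, since every drop lands at a value $\ge\ell_{n+1}+1$ and the gap never decreases during a climb.

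For the final assertion, \eqref{notfast} says precisely that for every $\ell$ there is a time $N$ with $\ell_N+\ell\le N$; equivalently, $\ell_n$ falls arbitrarily far behind $n$ infinitely often. I would therefore single out infinitely many of the $n_j$ as ``branch times'', spaced so far apart that each is reached at the end of a long climb; at such a time $t_{n_j}$ is of order $n_j$ while, by \eqref{notfast}, $\ell_{n_j+1}$ may be taken much smaller, so the admissible set $\{\ell_{n_j+1}+1,\dots,t_{n_j}\}\setminus I_j$ has at least two elements, each $\ge\ell_{n_j+1}+1$. Keeping a disjoint infinite family of hug times (so that every choice still produces infinitely many $n$ with $t_n=\ell_n+1$) and choosing independently at each branch time, one obtains $2^{\aleph_0}$ distinct sequences $(t_n)$, each meeting the hypotheses of Theorem~\ref{itin} and satisfying $t_n\ge\ell_n+1$ for all $n$; these give $2^{\aleph_0}$ distinct itineraries with respect to $\{A_n(R_0)\}$, each realised by a point satisfying \eqref{presc}, distinctness being clear because two choices at a branch time make the corresponding terms $t_{n_j+1}$ differ.

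The step I expect to be the main obstacle is the combinatorial bookkeeping in this construction of $(t_n)$: the jump times $n_j$ and exceptional sets $I_j$ are handed to us by Theorem~\ref{itin} and cannot be chosen, yet we must simultaneously keep $t_n\ge\ell_n+1$, force $t_n=\ell_n+1$ infinitely often, and (for the last part) secure two admissible choices at infinitely many $n_j$. The borderline cases --- verifying that a hug to $\ell_{n_j+1}+1$ is always legitimate, and that after a hug one can climb back to a configuration from which the next hug, and the next branch, are available --- are where care is needed, with the dichotomy ``$\ell$ is eventually increasing by $1$ at every step'' versus ``$\ell$ fails to increase by $1$ infinitely often'' providing the mechanism.
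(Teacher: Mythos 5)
Your approach --- reducing Theorem~\ref{prescribed} to Theorem~\ref{itin} by translating the sequence $(a_n)$ into target annular indices $\ell_n$ and building an itinerary $(t_n)$ with $t_n\ge \ell_n+1$ and $t_n=\ell_n+1$ infinitely often --- is genuinely different from the paper's, which proves the theorem directly from Theorem~\ref{annuli-MC} (respectively Lemma~\ref{annuli-N}) by applying Lemma~\ref{top} to annuli $E_n$ chosen from the sequence $B_n$ (respectively built afresh via the covering lemmas). The two elementary inequalities you extract from the definition of $\ell_n$ are correct and are indeed exactly the point of the $M^2$ bound. However, your construction of $(t_n)$ contains a genuine gap in the step you yourself flagged as delicate: you assert that the exceptional set $I_j$ ``in the cases that occur, [contains] only the index $0$, associated with the central disc $A_0(R_0)$.'' This is false. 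When $f$ has a multiply connected Fatou component, $I_j=\emptyset$, but when it has none, $I_j$ arises in Theorem~\ref{annuli-noMC} from Corollary~\ref{Bohr}, where the omitted set is a small disc about an arbitrary $w_1\in B(0,M(r))$; the index of the one annulus among $B_0,\dots,B_{n_j}$ that may be missed by $f(B_{n_j})$ can be any index in $\{0,\dots,n_j\}$, not just $0$. Consequently, a ``hug'' down to exactly $\ell_{n+1}+1$ is not always admissible, and dropping to $\ell_{n+1}+2$ instead only yields $|f^{n+1}(\zeta)|\le M^3(a_{n+1})$. Without further argument (which neither the greedy scheme nor the dichotomy you describe supplies) one cannot exclude that the blocker $I_j$ coincides with the target $\ell_{n+1}+1$ at every available drop, so the $M^2$ bound is not secured.

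Separately, be careful about the reading of Theorem~\ref{itin}: the sequence $(n_j)$ there is a sequence of \emph{annular indices} inherited from Theorem~\ref{annuli} (a drop from $s_n$ is possible when the current annular index $s_n$ equals some $n_j$, because that is when the covering $f(B_{n_j})\supset B_m$, $m\le n_j$, is available), not a sequence of prescribed \emph{time steps}. Your argument, as written, treats the $n_j$ as fixed time steps at which drops are permitted, which is not what the covering property justifies. The construction can be recast in the correct language (drop when the climbing $t_n$ reaches one of the $n_j$), and most of your bookkeeping survives, but the $I_j$ issue above remains. The paper's route avoids the whole problem by applying the covering lemmas directly, choosing both candidate annuli $\overline A(S,S')$ and $\overline A(T,T')$ so close to $a_{n_j}$ that the at-most-one-exception ambiguity is harmless (and in the no-multiply-connected case this even improves the bound to $a_{n_j}^{1+\eps}$); Theorem~\ref{itin}, used as a black box, discards precisely that freedom of choosing where the ``or'' lands.
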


{\it Remarks}\;\;1.\; In Theorem~\ref{prescribed} we cannot replace \eqref{presc} by
\begin{equation}\label{twosided}
a_n\le |f^n(\zeta)|\le Ca_n,\quad\text{for sufficiently large } n,
\end{equation}
where $C>1$ is an absolute constant. Indeed, \cite[proof of Theorem~2]{RS08c} shows that if there exist $\zeta\in J(f)$ and $C>1$ such that \eqref{twosided} holds whenever $(a_n)$ is a positive sequence such that $a_n\to\infty$ as $n\to\infty$ and $a_{n+1}\le M(a_n)$ for $n\in\N$, then there must exist constants $c>0$, $d>1$ and $r_0>0$ such that
\begin{equation}\label{minmod}
\text{for all }r\ge r_0\;\;\text{we have } m(s)\le c\;\; \text{for some } s\in
(r,dr),
\end{equation}
where $m(r)$ denotes the {\it minimum modulus} of $f$:
\begin{equation}\label{minmod-def}
m(r)=m(r,f)=\min_{|z|=r} |f(z)|,\quad\text{for }r>0.
\end{equation}
There are many {\tef}s that do not satisfy \eqref{minmod}, in particular those with a {\mconn} Fatou component.

2.\;In \cite{RS12c} we use Theorem~\ref{prescribed} to give a necessary and sufficient condition for a certain subset of $I(f)$ called $Q(f)$, the `quite fast escaping set', to equal $A(f)$. An important special case of the results in~\cite{RS12c} is that $Q(f)=A(f)$ for functions in the class ${\mathcal B}$.\\

We prove the results above by using a method that has its origins in one that we introduced in~\cite{RS08c} in order to construct points that escape arbitrarily slowly. This method involves two complementary annuli covering results, which are significant generalisations of results in~\cite{RS08c} and of independent interest. Both results are given in Section~\ref{preliminary}.

In Section~\ref{MC} we prove a version of Theorem~\ref{annuli} for a {\tef} that has a {\mconn} Fatou component, and in Section~\ref{noMC} we prove a version for a {\tef} that has no {\mconn} Fatou components. The proofs in these two cases are significantly different. In Section~\ref{itin-proof} we prove Theorem~\ref{itin} and Corollary~\ref{itin-types}, and in Section~\ref{prescribed-proof} we prove versions of Theorem~\ref{prescribed} in each of the two cases mentioned above, showing that we obtain a much better upper estimate than the one in \eqref{presc} when there are no {\mconn} Fatou components.

\section{Annuli covering theorems}\label{preliminary}
\setcounter{equation}{0}
To obtain the sequence of annuli in Theorem~\ref{annuli} we prove two theorems about covering properties of annuli, which are considerably stronger than previous results of this nature. Here and later in the paper we use the following basic facts about the maximum modulus.

\begin{lemma}\label{Had}
Let $f$ be a {\tef}. Then
\begin{itemize}
\item[(a)]
$\displaystyle\frac{\log M(r)}{\log r}\to \infty\;\;\text{as } r\to\infty;$
\item[(b)]
there exists $R_1=R_1(f)>0$ such that
\[
M(r^k)\ge M(r)^k,\;\;\text{for } r\ge R_1,\; k>1;
\]
\item[(c)]
$\displaystyle\frac{M(2r)}{M(r)}\to \infty\;\;\text{as } r \to \infty.$
\end{itemize}
\end{lemma}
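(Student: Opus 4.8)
The plan is to derive all three parts from the Hadamard three-circles theorem, which states that $\log M(r)$ is a convex function of $\log r$. Write $\phi(t) = \log M(e^t)$, so $\phi$ is convex and increasing on $[\log R, \infty)$ for suitable $R$, and since $f$ is transcendental, $\phi(t)/t \to \infty$ as $t \to \infty$: indeed if $\phi(t) \le Ct$ for arbitrarily large $t$ then $f$ would have at most polynomial growth along a sequence, hence by convexity everywhere, contradicting transcendence. This last fact is precisely part~(a) rewritten, so I would establish it first as the foundational input.

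For part~(b), fix $r \ge R_1$ and $k > 1$. In terms of $\phi$, the claim $M(r^k) \ge M(r)^k$ becomes $\phi(kt) \ge k\,\phi(t)$ where $t = \log r$. Convexity of $\phi$ on $[0, kt]$ gives, comparing the chord from $0$ to $kt$ with the point at $t$,
\[
\phi(t) \le \Bigl(1 - \tfrac1k\Bigr)\phi(0) + \tfrac1k\,\phi(kt),
\]
which rearranges to $\phi(kt) \ge k\,\phi(t) + (1-k)\phi(0) = k\,\phi(t) - (k-1)\phi(0)$. This is not quite what we want because of the term $(k-1)\phi(0)$; the fix is to exploit that $\phi(t)$ is large. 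Choose $R_1$ so large that $\phi(\log R_1) \ge \phi(0)$, i.e. so that $M(R_1) \ge M(1)$ (or, more carefully, normalise using a base point $t_0$ with $\phi(t_0) \ge 0$, which exists once $M$ exceeds $1$). Then replacing the chord argument with one anchored at $t_0 \le t$ instead of at $0$, convexity on $[t_0, kt]$ gives $\phi(t) \le \frac{kt - t}{kt - t_0}\phi(t_0) + \frac{t - t_0}{kt - t_0}\phi(kt)$, and since $\phi(t_0) \ge 0$ this yields $\phi(kt) \ge \frac{kt - t_0}{t - t_0}\phi(t) \ge k\,\phi(t)$, using $t \ge t_0$. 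So the key step is choosing the normalising base point correctly; everything else is rearrangement.

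For part~(c), suppose toward a contradiction that $M(2r)/M(r) \le K$ along some sequence $r_m \to \infty$. In terms of $\phi$ this says $\phi(t_m + \log 2) - \phi(t_m) \le \log K$ for $t_m \to \infty$. Since $\phi$ is convex, its difference quotients (equivalently, the right derivative $\phi'_+$) are nondecreasing, so $\phi'_+(t) \le \log K / \log 2$ for all $t \le t_m$, hence for all $t$; integrating, $\phi(t) \le \phi(t_0) + (t - t_0)\log K/\log 2$, so $\phi(t)/t$ is bounded, contradicting part~(a). The main obstacle across the lemma is really just the bookkeeping in part~(b): getting a clean multiplicative inequality $M(r^k) \ge M(r)^k$ valid for \emph{all} $k > 1$ simultaneously (not just integer $k$) requires the anchored-chord argument rather than a naive one, and one must be slightly careful that the same $R_1$ works uniformly in $k$ — which it does, since the inequality $\frac{kt - t_0}{t - t_0} \ge k$ holds for every $k > 1$ once $t \ge t_0$.
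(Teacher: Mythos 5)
The paper itself does not supply a proof of this lemma: part~(a) is declared a standard property of a transcendental entire function, part~(b) is referenced to \cite[Lemma~2.1]{RS08}, and part~(c) is said to follow easily from part~(b). Your arguments for parts~(a) and~(c) are correct. In part~(a), boundedness of $\phi(t)/t$ along a sequence does force linear growth everywhere via the secant bound, which gives polynomial growth of $f$ and a contradiction. For part~(c), you derive the conclusion directly from convexity rather than from part~(b) as the paper suggests (via $M(2r)=M(r^{1+\log 2/\log r})\ge M(r)^{1+\log 2/\log r}$), but both routes are sound.

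Part~(b), however, has a genuine gap. From the convexity inequality
\[
\phi(t)\le\frac{kt-t}{kt-t_0}\,\phi(t_0)+\frac{t-t_0}{kt-t_0}\,\phi(kt),
\]
you assert that $\phi(t_0)\ge 0$ lets you drop the first term and conclude $\phi(kt)\ge\frac{kt-t_0}{t-t_0}\phi(t)$. The sign is backwards: rearranging gives
\[
\phi(kt)\ge\frac{kt-t_0}{t-t_0}\,\phi(t)-\frac{kt-t}{t-t_0}\,\phi(t_0),
\]
and the subtracted term is non-negative when $\phi(t_0)\ge 0$, so it cannot be discarded; you would need $\phi(t_0)\le 0$. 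Furthermore, your justification of $\frac{kt-t_0}{t-t_0}\ge k$ as holding ``once $t\ge t_0$'' is wrong: cross-multiplying shows this inequality is equivalent to $(k-1)t_0\ge 0$, i.e.\ $t_0\ge 0$, not $t\ge t_0$. So the approach needs a point with both $t_0\ge 0$ and $\phi(t_0)\le 0$, i.e.\ some $r_0\ge 1$ with $M(r_0)\le 1$, and no such $r_0$ exists when, say, $|f(0)|\ge 1$. A corrected chord argument does work if set up differently: fix any $t'>0$ and take $t$ large enough that $\phi(t)/t\ge\phi(t')/t'$ (possible by part~(a)); then extending the chord through $(t',\phi(t'))$ and $(t,\phi(t))$ to $kt$ and using that a convex function lies above such an extended chord gives
\[
\phi(kt)\ge\phi(t)+(k-1)\,t\cdot\frac{\phi(t)-\phi(t')}{t-t'}\ge k\,\phi(t),
\]
the last inequality being equivalent to $\phi(t)/t\ge\phi(t')/t'$. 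This identifies the true source of $R_1$: it is the threshold beyond which part~(a) has taken effect, not a normalisation of $M$ at the origin.
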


Part~(a) is a standard property of a {\tef} and part~(b) is a version of Hadamard convexity (see \cite[Lemma~2.1]{RS08}). Part~(c) is well known and easily follows from part~(b), for example.

The proof of our first covering theorem uses the contraction property of the hyperbolic metric. We denote the density of the hyperbolic metric at a point $z$ in a hyperbolic domain $G$ by $\rho_G(z)$
and the hyperbolic distance between~$z_1$ and~$z_2$ in $G$ by
$\rho_G(z_1,z_2)$. Also, for $w\in\C$ and $0<r<s$ we write
\[
B(w,r)=\{z:|z-w|<r\},
\]
\[
A(r,s)=\{z:r<|z|<s\}\;\;\text{and}\;\;\overline A(r,s)=\{z:r\le |z|\le s\},
\]
and we recall that the minimum modulus $m(r)=m(r,f)$ was defined in \eqref{minmod}.
\begin{theorem}\label{Bohr-thm}
Let $f$ be a {\tef}. If $r>0$ and
\begin{equation}\label{msmall}
\text{there exists } s\in (2r,4r)\text{ such that }\;
m(s)\le 1,
\end{equation}
then either
\[
f\left(A(r, 8r)\right)\supset B(0,M(r)),
\]
or there exists $w_1\in B(0, M(r))$ such that
\[
f\left(A(r, 8r)\right)\supset B(0,M(r))\setminus B(w_1,\eps(r)\max\{|w_1|,1\}),
\]
where
\[
\eps(r)=\frac{2C_1}{(C_1\lambda(r))^{1/C_0}},\;\;\text{with}\;\;\lambda(r)=\frac12\left(\frac{M(2r)}{M(r)}-1\right),
\]
and $C_0,C_1>1$ are absolute constants.
\end{theorem}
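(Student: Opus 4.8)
The plan is to reduce the covering property to a statement about the image of a round annulus under $f$, and then use the hyperbolic metric to control how much of the disc $B(0,M(r))$ can fail to be covered. First I would fix $s\in(2r,4r)$ with $m(s)\le 1$, so that on the circle $|z|=s$ the function $f$ takes a value in the closed unit disc, while on the circle $|z|=2r$ (or $|z|=r$) it takes a value of modulus $M(r)$ by the maximum modulus principle (more precisely, somewhere in $\overline{A}(r,2r)$ the modulus of $f$ equals $M(r)$, since $M$ is increasing). Thus the image $f(A(r,8r))$ contains points near $0$ and points on $|w|=M(r)$, so it is a connected set joining a neighbourhood of $0$ to the circle of radius $M(r)$. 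The issue is purely that this image need not be all of $B(0,M(r))$: it could omit a region around some $w_1$.

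The key step is a normality/hyperbolic-contraction argument. Suppose $f(A(r,8r))$ omits two points $w_1,w_2\in B(0,M(r))$ that are ``far apart'' in a suitable sense. Then $g=f$, restricted to a slightly smaller annulus, maps into $\C\setminus\{w_1,w_2\}$, on which the hyperbolic metric is comparable (via standard estimates, e.g. those of Bohr-type or the explicit bounds on $\rho_{\C\setminus\{0,1\}}$) to $|dw|/(|w-w_1|\log(1/|w-w_1|))$ near $w_1$ and similarly near $w_2$. The round annulus $A(r,8r)$ contains the ``middle'' circle $|z|=\sqrt{8}\,r$, and one can estimate its hyperbolic diameter: using the round annulus $A(r,8r)$ itself as a lower bound for $\rho_G$ with $G=A(r,8r)$, the hyperbolic distance in $G$ between the points where $|f|$ is small and where $|f|=M(r)$ is bounded by an absolute constant $C_0$. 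By the contraction principle, the hyperbolic distance in $\C\setminus\{w_1,w_2\}$ between the corresponding image points is also at most $C_0$. Integrating the lower bound for the hyperbolic density of $\C\setminus\{w_1,w_2\}$ along a path realising this distance forces $|w_1|$ and $|w_2|$ to be close, quantitatively: the radius of the ``hole'' around $w_1$ that can be omitted is at most $\eps(r)\max\{|w_1|,1\}$, where the factor $\lambda(r)=\tfrac12(M(2r)/M(r)-1)$ enters because it measures how much the modulus of $f$ grows between $|z|=r$ and $|z|=2r$, hence how ``spread out'' the image annulus is, and $C_0,C_1$ are the absolute constants coming from the two-sided comparison of $\rho_{\C\setminus\{w_1,w_2\}}$ with the model metric near a puncture. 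The upshot is the dichotomy stated: either nothing is omitted, or only a single disc $B(w_1,\eps(r)\max\{|w_1|,1\})$ is omitted.

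Concretely, the order of steps I would carry out is: (1) record that $\overline{A}(r,2r)$ contains a point where $|f|=M(r)$ and that $|z|=s$ contains a point where $|f|\le 1$; (2) note that $f$ maps $A(r,8r)$ to a connected set meeting both $\{|w|\le 1\}$ and $\{|w|=M(r)\}$, so if it omits $w_1$ then $f:A(r,8r)\to\C\setminus\{0,w_1\}$ after translating, or better work directly with two omitted values; (3) bound $\rho_{A(r,8r)}(z_1,z_2)\le C_0$ for any $z_1$ with $|f(z_1)|\le 1$ and $z_2$ with $|f(z_2)|=M(r)$, using an explicit curve (e.g.\ along a ray plus an arc of the middle circle) and the standard estimate $\rho_{A(r,8r)}(z)\le c/|z|$ on the middle circle together with $\rho_{A(r,8r)}(z)$ comparable to $1/(|z|\log(\cdot))$ near the boundary; (4) apply contraction to get $\rho_{\C\setminus\{w_1,w_2\}}(f(z_1),f(z_2))\le C_0$; (5) use the lower estimate for the hyperbolic density of the twice-punctured plane near a puncture to convert this into the inequality $|w_1|\le$ (something)$\cdot|w_2|$ type bound, and hence deduce that a second omitted point $w_2$ must lie in the claimed small disc about $w_1$; (6) assemble the constants into the stated form of $\eps(r)$.

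The main obstacle will be step (5), the quantitative conversion of a bounded hyperbolic distance in $\C\setminus\{w_1,w_2\}$ into the explicit Euclidean estimate $\eps(r)=2C_1(C_1\lambda(r))^{-1/C_0}$. This requires a clean two-sided estimate for $\rho_{\C\setminus\{w_1,w_2\}}$ that is uniform in the positions of $w_1,w_2$ (after normalising by an affine map sending $w_1,w_2$ to $0,1$, so that the dependence on $|w_1-w_2|\approx M(r)$ scale is tracked correctly) and careful bookkeeping of how $\lambda(r)$ — equivalently the ratio $M(2r)/M(r)$, which by Lemma~\ref{Had}(c) tends to $\infty$ — governs the hyperbolic size of the source annulus and thus the exponent $1/C_0$. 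The appearance of $\max\{|w_1|,1\}$ rather than just $|w_1|$ is the boundary case where $w_1$ is near $0$, which is why the hypothesis $m(s)\le 1$ (rather than $m(s)$ small) is exactly what is needed. Getting all the absolute constants to be genuinely absolute, independent of $f$ and $r$, is the delicate part; everything else is a fairly standard hyperbolic-geometry covering argument in the spirit of the earlier work in \cite{RS08c}.
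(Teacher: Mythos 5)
Your overall strategy --- hyperbolic contraction from a round annulus into $\C\setminus\{w_1,w_2\}$, normalisation by the affine map $L(w)=(w-w_1)/(w_2-w_1)$ to $\C\setminus\{0,1\}$, and a density estimate near the punctures --- is exactly the paper's approach, and your steps (1)--(6) follow the paper's outline quite closely. But there is a concrete error in the choice of the large-modulus base point that, as written, makes the argument collapse.

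In steps (1) and (3) you propose to take $z_2\in\overline{A}(r,2r)$ with $|f(z_2)|=M(r)$. The trouble is that the omitted values $w_1,w_2$ live in $B(0,M(r))$ and may have modulus arbitrarily close to $M(r)$, so the quantity
\[
|L(f(z_2))| \;\ge\; \frac{|f(z_2)|-|w_1|}{|w_2|+|w_1|} \;\ge\; \frac{M(r)-|w_1|}{|w_2|+|w_1|}
\]
can be as small as $0$. You get no separation of $L(f(z_2))$ from the punctures $\{0,1\}$, hence no lower bound on $\rho_{\C\setminus\{0,1\}}\bigl(L(f(z_1)),L(f(z_2))\bigr)$, hence no contradiction with the contraction bound --- regardless of how small $\beta$ is. What is needed, and what the paper does, is to take $\zeta'$ on the circle $|z|=s$ with $|f(\zeta')|=M(s)\ge M(2r)$; this gives $|L(f(\zeta'))|\ge \bigl(M(2r)-M(r)\bigr)/\bigl(2M(r)\bigr)=\lambda(r)$, which is precisely where $\lambda(r)$ enters the estimate and, by Lemma~\ref{Had}(c), tends to $\infty$. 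Your own parenthetical remark that $\lambda(r)$ ``measures how much the modulus of $f$ grows between $|z|=r$ and $|z|=2r$'' shows you have the right intuition, but the explicit steps you list never actually touch $M(2r)$.

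Placing both $\zeta$ (with $|f(\zeta)|\le 1$) and $\zeta'$ (with $|f(\zeta')|=M(s)$) on the \emph{same} circle $|z|=s$, the middle circle of $A(s/2,2s)\subset A(r,8r)$, also gives the absolute hyperbolic-diameter bound $\rho_{A(s/2,2s)}(\zeta,\zeta')\le\tfrac12\log C_0$ for free, sidestepping the delicate path estimate you sketch in step (3) for points at different radii. Finally, for step (5) the paper only needs a one-sided (lower) bound on the hyperbolic density of $\C\setminus\{0,1\}$ (Hayman, Theorem~9.13), not a two-sided comparison as you suggest; the integral $\int_{2/\beta}^{\lambda(r)} \frac{dt}{2t\log(C_1 t)}$ does the conversion in one line. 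With these corrections your proof would match the paper's.
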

Note that $\eps(r)\to 0$ as $r \to \infty$, by Lemma~\ref{Had} part~(c), so we have the following corollary of Theorem~\ref{Bohr-thm}.
\begin{corollary}\label{Bohr}
Let $f$ be a {\tef}. There exists $R_2=R_2(f)>0$ such that if $r>R_2$, \eqref{msmall} holds, and $S,S',T,T'$ satisfy
\begin{equation}\label{Rconds}
2<S<S',\;\;T<T'\le M(r)\;\text {and }\;S'\le \tfrac12 T,
\end{equation}
then
\[
f\left(A(r, 8r)\right)\;\;\text{contains}\;\;  A(S,S')\;\;
\text{or}\;\; A(T,T').
\]
\end{corollary}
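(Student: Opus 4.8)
The plan is to deduce Corollary~\ref{Bohr} from Theorem~\ref{Bohr-thm} by choosing $R_2$ large enough that the error term $\eps(r)$ is very small, and then observing that a disc of such a small radius, centred at the exceptional point $w_1$, cannot meet both of the annuli $A(S,S')$ and $A(T,T')$, because the conditions \eqref{Rconds} force these two annuli to be separated by a definite multiplicative gap.

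First I would use the fact, noted immediately after Theorem~\ref{Bohr-thm} and following from Lemma~\ref{Had}(c), that $\eps(r)\to 0$ as $r\to\infty$, to fix $R_2=R_2(f)>0$ with $\eps(r)\le \tfrac14$ for all $r>R_2$; any fixed bound of this kind will serve. Now fix $r>R_2$ for which \eqref{msmall} holds, together with $S,S',T,T'$ satisfying \eqref{Rconds}. Since \eqref{Rconds} gives $S'\le\tfrac12 T\le\tfrac12 T'\le\tfrac12 M(r)$, both open annuli $A(S,S')$ and $A(T,T')$ lie in $B(0,M(r))$. I then apply Theorem~\ref{Bohr-thm}. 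If the first alternative holds, then $f(A(r,8r))\supset B(0,M(r))$ contains both annuli and there is nothing more to prove.

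Suppose instead the second alternative holds, so that there is $w_1\in B(0,M(r))$ with $f(A(r,8r))\supset B(0,M(r))\setminus B(w_1,\delta)$, where $\delta=\eps(r)\max\{|w_1|,1\}$. The crux is to show that $B(w_1,\delta)$ cannot meet both $A(S,S')$ and $A(T,T')$. If $B(w_1,\delta)\cap A(S,S')=\emptyset$, then $A(S,S')\subset B(0,M(r))\setminus B(w_1,\delta)\subset f(A(r,8r))$, as required; so assume there is $z_1\in B(w_1,\delta)\cap A(S,S')$. From $|w_1|\le |z_1|+\delta<S'+\eps(r)\max\{|w_1|,1\}$, together with $\eps(r)\le\tfrac14$ and $S'>2$, a short case distinction according to whether $|w_1|\le 1$ or $|w_1|>1$ gives $\max\{|w_1|,1\}<2S'$, and hence $\delta<2\eps(r)S'$. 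If now there were also a point $z_2\in B(w_1,\delta)\cap A(T,T')$, then on the one hand
\[
|z_2-z_1|\le |z_2-w_1|+|w_1-z_1|<2\delta<4\eps(r)S'\le S',
\]
while on the other hand, since $|z_2|>T\ge 2S'$ and $|z_1|<S'$,
\[
|z_2-z_1|\ge |z_2|-|z_1|>2S'-S'=S',
\]
a contradiction. Hence $B(w_1,\delta)\cap A(T,T')=\emptyset$, so $A(T,T')\subset f(A(r,8r))$, and the proof is complete.

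I do not expect any genuine obstacle here: this is a quantitative but routine unpacking of Theorem~\ref{Bohr-thm}. The only points needing a little attention are the estimate $\max\{|w_1|,1\}<2S'$, which controls the way the radius of the exceptional disc in Theorem~\ref{Bohr-thm} scales with $|w_1|$, and keeping track of the fact that it may be either of the two annuli that ends up being covered, which is precisely why the conclusion is phrased as an alternative.
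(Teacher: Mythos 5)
Your proof is correct and is precisely the argument the paper leaves implicit: the paper states only that $\eps(r)\to 0$ as $r\to\infty$ and then asserts the corollary, and your careful unpacking — fixing $R_2$ so that $\eps(r)\le\tfrac14$, bounding $\max\{|w_1|,1\}<2S'$ when the exceptional disc meets $A(S,S')$, and using the multiplicative gap $T\ge 2S'$ to show the disc cannot reach $A(T,T')$ — is exactly the routine deduction the authors had in mind.
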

\begin{proof}[Proof of Theorem~\ref{Bohr-thm}]
Suppose that~\eqref{msmall} holds for some $r>0$, and $s\in (2r, 4r)$ satisfies $m(s)\le 1$. Clearly, $A(r,8r)\supset A(\tfrac12s,2s)$. Then take $\zeta$ and $\zeta'$ such that $|\zeta|=|\zeta'|=s$, and
\begin{equation}\label{zeta}
|f(\zeta)|\le 1\;\;\text{and}\;\;|f(\zeta')|=M(s)\ge M(2r).
\end{equation}
There is an absolute constant $C_0>1$ such that $\rho_{A(s/2,2s)}(\zeta,\zeta')\le \tfrac12\log C_0$.

Now suppose that $f$ omits in $A(s/2, 2s)$ two values $w_1, w_2 \in B(0,M(r))$ such that
\begin{equation}\label{beta}
|w_2-w_1|= \beta \max\{|w_1|,1\},\;\;\text{where } \beta >0.
\end{equation}
By Pick's theorem~\cite[Theorem~4.1]{CG},
\begin{eqnarray}\label{Pick}
\frac12\log C_0&\ge&\rho_{A(s/2,2s)}(\zeta,\zeta')\\
&\ge& \rho_{f(A(s/2,2s))}(f(\zeta),f(\zeta'))\notag\\
&>& \rho_{\C\setminus \{w_1,w_2\}}(f(\zeta),f(\zeta'))\notag\\
&=&\rho_{\C\setminus \{0,1\}}(L(f(\zeta)),L(f(\zeta')))\notag,
\end{eqnarray}
where $L(w)=(w-w_1)/(w_2-w_1)$. Then, by~\eqref{zeta} and \eqref{beta},
\[
|L(f(\zeta))|\le \frac{|f(\zeta)|+|w_1|}{|w_2-w_1|}\le \frac{1+|w_1|}{\beta \max\{|w_1|,1\}}\le \frac{2}{\beta}
\]
and
\[
|L(f(\zeta'))|\ge \frac{|f(\zeta')|-|w_1|}{|w_2|+|w_1|}\ge \frac{M(2r)- M(r)}{2M(r)}= \frac12\left(\frac{M(2r)}{M(r)}-1\right)=\lambda(r).
\]

Now suppose that $\lambda(r)> 2/\beta$. Then, by \eqref{Pick} and \cite[Theorem~9.13]{wH89},
\begin{eqnarray}\label{Hayman}
\frac12\log C_0
&>&\rho_{\C\setminus \{0,1\}}(L(f(\zeta)),L(f(\zeta')))\\
&\ge& \int_{2/\beta}^{\lambda(r)} \frac{dt}{2t(\log C_1 t)}= \frac12\log \frac{\log (C_1\lambda(r))}{\log(2C_1/\beta)},\notag
\end{eqnarray}
where $C_1>1$ is an absolute constant. Thus if
\[
\beta > \eps(r)=\frac{2C_1}{(C_1\lambda(r))^{1/C_0}},
\]
then $\lambda(r)> 2/\beta$, so we obtain a contradiction to \eqref{Hayman}. This proves the result.
\end{proof}

Our second covering theorem is complementary to Corollary~\ref{Bohr}, since we assume here that the modulus of $f$ is greater than~$1$ throughout most of the annulus. Here and subsequently we use the following function in order to simplify notation:
\begin{equation}\label{delta-def}
\delta(r)=\frac{1}{\sqrt{\log r}}.
\end{equation}
\begin{theorem}\label{Harnack}
Let f be a {\tef}, let $k>1$ and let $r$ be such that $r\ge R_1$, where $R_1$ is the constant in Lemma~\ref{Had}, and also such that
\begin{equation}\label{Mr9}
M(r)>r^{9}\;\;\text{and}\;\;\delta(r)<\min\{1/(2\pi),(k-1)/4\}.
\end{equation}

If
\begin{equation}\label{mbig}
m(s)> 1,\quad\text{for } s\in (r^{1+\delta(r)},r^{k-\delta(r)}),
\end{equation}
then the following properties hold.
\begin{itemize}
\item[(a)] We have
 \[
 \log m(s)\ge \left(1-2\pi\delta(r)\right)\log M(s)>0,
 \quad\text{for } s\in [r^{1+2\delta(r)},r^{k-2\delta(r)}].
 \]
 \item[(b)] Let $R=M(r)$ and define $K=K(r)$ by
\[
 R^K=(M(r^{k-2\delta(r)}))^{1-2\pi\delta(r)}.
\]
Then $R > r^{9}$ and
\begin{equation}\label{outside}
f(A(r,r^{k-2\delta(r)}))\supset A\left(R,R^{K}\right)\;\;\text{and}\;\;K\ge k(1-9\delta(r)).
\end{equation}
\item[(c)] If, in addition,
\begin{equation}\label{s-larger}
\delta(r)<\min\{1/(6\pi),(k-1)/(6\pi+1)\},
\end{equation}
then
\begin{equation}\label{inside}
f(A(r^{1+6\pi\delta(r)},r^{k(1-6\pi\delta(r))}))\subset A\left(R^{1+6\pi\delta(R)},R^{K(1-6\pi\delta(R))}\right).
\end{equation}
\end{itemize}
\end{theorem}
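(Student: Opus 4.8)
The three parts of the theorem are linked: (a) is a Harnack-type estimate for the positive harmonic function $u=\log|f|$ on the annulus $A_0:=A(r^{1+\delta(r)},r^{k-\delta(r)})$ (where $f$ is zero-free with $|f|>1$, by~\eqref{mbig}), while~(b) and~(c) will be read off from~(a) together with the argument principle and Hadamard convexity (Lemma~\ref{Had}(b)). So the plan is to establish~(a) first.

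For~(a) I would pass to a strip via the logarithm. Writing $z=e^{w}$, the function $\widetilde u(w)=u(e^{w})$ is positive, harmonic and $2\pi i$-periodic on $\Sigma=\{w:(1+\delta(r))\log r<\operatorname{Re}w<(k-\delta(r))\log r\}$. The point of the normalisation $\delta(r)=1/\sqrt{\log r}$ is that, for $s\in[r^{1+2\delta(r)},r^{k-2\delta(r)}]$, the vertical line $\operatorname{Re}w=\log s$ lies at distance at least $\delta(r)\log r=\sqrt{\log r}=1/\delta(r)$ from $\partial\Sigma$, whereas by $2\pi$-periodicity the values $\log M(s)=\max\widetilde u$ and $\log m(s)=\min\widetilde u$ on that line are attained at points $p,q$ whose imaginary parts may be chosen to differ by at most~$\pi$. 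Applying Harnack's inequality to $\widetilde u$ on the disc $B(q,1/\delta(r))\subset\Sigma$ then gives
\[
\frac{\log M(s)}{\log m(s)}=\frac{\widetilde u(p)}{\widetilde u(q)}\le\frac{1/\delta(r)+\pi}{1/\delta(r)-\pi}=\frac{1+\pi\delta(r)}{1-\pi\delta(r)}\le\frac{1}{1-2\pi\delta(r)},
\]
the last step being elementary and using $\delta(r)<1/(2\pi)$ (from~\eqref{Mr9}); since $m(s)>1$ this yields both assertions of~(a).

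For~(b), the bound $R=M(r)>r^{9}$ is immediate from~\eqref{Mr9}. The covering statement in~\eqref{outside} I would obtain from the argument principle on the annulus $A(r,r^{k-2\delta(r)})$: if $R<|w_{0}|<R^{K}$ then $|f|\le M(r)=R<|w_{0}|$ on $|z|=r$, while on $|z|=r^{k-2\delta(r)}$ part~(a) gives $|f|\ge m(r^{k-2\delta(r)})\ge M(r^{k-2\delta(r)})^{1-2\pi\delta(r)}=R^{K}>|w_{0}|$, so $f-w_{0}$ has in $A(r,r^{k-2\delta(r)})$ exactly as many zeros as $f$ has in $\{|z|<r^{k-2\delta(r)}\}$. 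A small gap to fill here is that this number is positive: otherwise $f=e^{h}$ on $\{|z|<r^{k-2\delta(r)}\}$ and $\log m(r^{1+2\delta(r)})\le\operatorname{Re}h(0)=\log|f(0)|$, contradicting part~(a) and Lemma~\ref{Had}(b) once $r$ is large (which can be absorbed into the lower bound on~$r$). Finally $K=(1-2\pi\delta(r))\log M(r^{k-2\delta(r)})/\log M(r)\ge(1-2\pi\delta(r))(k-2\delta(r))$ by Lemma~\ref{Had}(b), and a direct computation using $k>1$ shows this is $\ge k(1-9\delta(r))$.

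For~(c) I would take any $z$ with $r^{1+6\pi\delta(r)}\le|z|\le r^{k(1-6\pi\delta(r))}$, noting that this range lies inside $[r^{1+2\delta(r)},r^{k-2\delta(r)}]$ and that $\delta(R)<\tfrac13\delta(r)$ since $R>r^{9}$, and combine $m(|z|)\le|f(z)|\le M(|z|)$ with part~(a), the monotonicity of $M$, and Lemma~\ref{Had}(b); using~\eqref{s-larger} one checks that $(1+6\pi\delta(r))(1-2\pi\delta(r))\ge1+6\pi\delta(R)$ and that $M(r^{k(1-6\pi\delta(r))})\le M(r^{k-2\delta(r)})^{(1-2\pi\delta(r))(1-6\pi\delta(R))}$, which together give the two containments in~\eqref{inside}. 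The step I expect to be genuinely delicate is the sharp form of~(a): everything afterwards is (careful) bookkeeping, but~(a) needs the Harnack constant on the long, thin annulus $A_{0}$ to be $1+O(\delta(r))$ with exactly the coefficient~$2\pi$, which is precisely what the choice $\delta(r)=1/\sqrt{\log r}$ is designed to provide, by forcing the radial width $\sqrt{\log r}$ to dominate the angular period $2\pi$ by a factor tending to infinity.
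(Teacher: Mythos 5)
Your proposal is correct and takes essentially the same approach as the paper: part~(a) by applying Harnack's inequality to $\log|f(e^{w})|$ on the strip arising from the zero-free annulus where $|f|>1$ (the paper cites \cite[Lemma~5(a)]{RS08c} for exactly this computation), and parts~(b) and~(c) by combining~(a) with the Hadamard convexity estimate of Lemma~\ref{Had}(b). One small streamlining in~(b): you need not enlarge $R_{1}$ to rule out $f$ being zero-free in $\{|z|<r^{k-2\delta(r)}\}$, since if $f=e^{h}$ there then the mean value property of the harmonic function $\log|f|$ at the origin gives $\log m(r^{k-2\delta(r)})\le\log|f(0)|\le\log M(r)=\log R$, directly contradicting the lower bound $m(r^{k-2\delta(r)})\ge R^{K}>R$ that you have already established (in the only case $K>1$ where \eqref{outside} has content); the paper itself treats the covering as immediate from the modulus estimates on the two boundary circles, which one can also justify topologically since $\partial f(A(r,r^{k-2\delta(r)}))\subset f(\partial A(r,r^{k-2\delta(r)}))$ is disjoint from $A(R,R^{K})$.
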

\begin{proof}
The proof of part~(a) is based on an application of Harnack's inequality to the harmonic function $u(t)=\log|f(e^t)|$, which is possible by~\eqref{mbig}. Since this part of the proof is identical to \cite[Lemma~5 part~(a)]{RS08c}, we omit the details.

To prove part~(b) first note that, by hypothesis,
\begin{equation}\label{Sands}
R=M(r)>r^{9},\;\;\text{so }\;\frac{\delta(r)}{\delta(R)}= \sqrt{\frac{\log R}{\log r}}> 3.
\end{equation}
Evidently
\begin{equation}\label{inner}
|f(z)|\le M(r)= R,\;\;\text{for } |z|=r,
\end{equation}
and, by part~(a) and the definition of~$K$,
\begin{equation}\label{outer}
m\left(r^{k-2\delta(r)}\right)\ge M\left(r^{k-2\delta(r)}\right)^{1-2\pi\delta(r)}= R^K.
\end{equation}
Also, by Lemma~\ref{Had} part~(b),
\begin{align}\label{Kandk}
R^K&=M\left(r^{k-2\delta(r)}\right)^{1-2\pi\delta(r)}\\
&\ge M\left(r\right)^{(k-2\delta(r))(1-2\pi\delta(r))}\notag\\
&\ge M\left(r\right)^{k(1-9\delta(r))}\notag\\
&=R^{k(1-9\delta(r))}.\notag
\end{align}
Taken together,~\eqref{inner},~\eqref{outer} and~\eqref{Kandk} imply~\eqref{outside}.

To prove part~(c), we note that $6\pi\delta(r)<1$ and
$1+6\pi\delta(r)\in(1+\delta(r),k-\delta(r))$, by \eqref{s-larger}. Thus, by
part~(a), Lemma~\ref{Had} part~(b), and~\eqref{Sands},
\begin{align}\label{inner1}
m\left(r^{1+6\pi\delta(r)}\right)
&\ge M\left(r^{1+6\pi\delta(r)}\right)^{1-2\pi\delta(r)}\\
& \ge M\left(r\right)^{(1+6\pi\delta(r))(1-2\pi\delta(r))}\notag\\
& \ge M\left(r\right)^{1+2\pi\delta(r)}\notag\\
& \ge M\left(r\right)^{1+6\pi\delta(R)}\notag\\
&= R^{1+6\pi\delta(R)}\notag.
\end{align}
Also, by Lemma~\ref{Had} part~(b),~\eqref{s-larger},~\eqref{Sands}, and the facts
that $k(1-2\delta(r))<k-2\delta(r)$ and $6\pi\delta(r)<1$,
\begin{align}\label{outer1}
M\left(r^{k(1-6\pi\delta(r))}\right)
&\le M\left(r^{k(1-2\delta(r))}\right)^{\frac{1-6\pi\delta(r)}{1-2\delta(r)}}\\
&< R^{\frac{K}{1-2\pi\delta(r)}\frac{1-6\pi\delta(r)}{1-2\delta(r)}}\notag\\
&\le R^{K(1-(6\pi-2-2\pi)\delta(r))}\notag\\
&\le R^{K(1-6\pi\delta(R))}\notag.
\end{align}
Taken together, the estimates~\eqref{inner1} and~\eqref{outer1} imply~\eqref{inside}, since $f$ has no zeros in the annulus $A(r^{1+6\pi\delta(r)},r^{k(1-6\pi\delta(r))})$.
\end{proof}

\section{Proof of Theorem~\ref{annuli}: {\mconn} Fatou components}\label{MC}
\setcounter{equation}{0}
In this section we prove a version of Theorem~\ref{annuli} in which it is assumed that~$f$ has a {\mconn} Fatou component. Recall from \cite[Theorem~3.1]{iB84} that if~$U$ is a {\mconn} Fatou component of a {\tef}~$f$, then the Fatou components $U_n=f^n(U)$, $n\ge 0$, form a sequence of eventually nested `ring-like' domains, in the sense that:
\begin{itemize}
\item[(a)]
each $U_n$ is bounded and {\mconn};
\item[(b)]
$U_{n+1}$ surrounds $U_n$ for sufficiently large $n$;
\item[(c)]
{\rm dist}$(U_n,0)\to\infty$ as $n\to\infty$.
\end{itemize}
Such {\mconn} Fatou components $U_n$ are known to contain large annuli centred at~$0$, as shown by the following result from \cite[Theorem~1.2]{BRS11} which strengthens an earlier result of Zheng \cite{jhZ02}.
\begin{lemma}\label{large-annuli}
Let $f$ be a {\tef} with a {\mconn} Fatou component $U$. For each $z_0 \in U$ and each open set $D \subset U$ containing $z_0$,
 there exists $\alpha > 0$ such that, for sufficiently large $n \in \N$,
    \[
     U_n \supset f^n(D) \supset A(|f^n(z_0)|^{1-\alpha}, |f^n(z_0)|^{1+\alpha}).
    \]
\end{lemma}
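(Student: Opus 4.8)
The plan is to combine the structure theory of multiply connected Fatou components recalled above (from \cite{iB84}) with the known fact, due to Zheng \cite{jhZ02}, that for large $n$ the component $U_n$ contains a round annulus about the origin whose modulus tends to infinity, and then to propagate such annuli forward under $f$ using Theorem~\ref{Harnack}.

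I would begin with two reductions. Since $f^m(D')\subset f^m(D)$ whenever $D'\subset D$, it is enough to treat the case in which $D$ is a small round disc centred at $z_0$ with $\overline D\subset U$. And since $f^n\to\infty$ locally uniformly on $U$ while, by properties (a)--(c), the $U_n$ are eventually bounded, multiply connected, nested and escaping, we may replace $(U,z_0,D)$ by $(U_N,f^N(z_0),\text{ a small disc in }f^N(D))$ for large $N$; so we may assume from the outset that $U$ is bounded, that $U_{n+1}$ surrounds $U_n$ for all $n\ge 0$, and that $\operatorname{dist}(U_n,0)\to\infty$. Then $0$ lies in a bounded complementary component of every $U_n$, so each $U_n$ contains a round annulus about $0$, and Zheng's theorem, sharpened by the structure theory, may be used in the form $U_n\supset A(|w_n|^{1-\beta},|w_n|^{1+\beta})$ for some fixed $\beta\in(0,1)$ and all large $n$, where $w_n:=f^n(z_0)$.

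The engine of the proof is a forward-propagation step: if $f^n(D)\supset A(|w_n|^{1-\alpha_n},|w_n|^{1+\alpha_n})$ for some large $n$ and some $\alpha_n\in(0,1)$, then $f^{n+1}(D)\supset A(|w_{n+1}|^{1-\alpha_{n+1}},|w_{n+1}|^{1+\alpha_{n+1}})$, where $\alpha_{n+1}=\alpha_n-C\delta(|w_n|^{1-\alpha_n})$ for a suitable absolute constant $C$. Indeed, apply Theorem~\ref{Harnack} with $r=|w_n|^{1-\alpha_n}$ and $k=(1+\alpha_n)/(1-\alpha_n)$: hypothesis \eqref{mbig} holds because each circle $\{|z|=s\}$ with $s$ in the relevant range lies in $f^n(D)\subset U_n$, on which $|f|\to\infty$, so $m(s)>1$ for $n$ large, and the other hypotheses of Theorem~\ref{Harnack} follow from Lemma~\ref{Had} since $r\to\infty$ and, provided $\alpha_n$ stays bounded below, $k-1$ does too. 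Part~(b) then gives $f^{n+1}(D)\supset A(R,R^K)$ with $R=M(r)$, and using parts~(a) and~(c) together with Hadamard convexity (Lemma~\ref{Had}(b)) --- applied to $R=M(|w_n|^{1-\alpha_n})$, to $R^K=M(|w_n|^{(1-\alpha_n)(k-2\delta(r))})^{1-2\pi\delta(r)}$, and to the two-sided bound $\log|w_{n+1}|\in\big[(1-2\pi\delta(r))\log M(|w_n|),\ \log M(|w_n|)\big]$ coming from part~(a) --- one checks that $R\le|w_{n+1}|^{1-\alpha_{n+1}}$ and $R^K\ge|w_{n+1}|^{1+\alpha_{n+1}}$. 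Finally, $\operatorname{dist}(U_n,0)\to\infty$ together with $M(r)>r^9$ for large $r$ (Lemma~\ref{Had}(a)) forces $\log|w_n|$ to grow at least geometrically, so the numbers $\delta(|w_n|^{1-\alpha_n})$ are summable; hence if the induction is begun far enough out, $\alpha_n$ stays above some fixed $\alpha>0$, and the lemma follows with this $\alpha$ --- once a base case is secured.

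Securing the base case, namely finding a single $n_1$ with $f^{n_1}(D)\supset A(|w_{n_1}|^{1-\alpha_0},|w_{n_1}|^{1+\alpha_0})$ for some $\alpha_0>0$, is the main obstacle, since a priori $f^{n_1}(D)$ is only a tiny open subset of $U_{n_1}$; note also that $f^n$ has no zeros in $D$ for large $n$ (as $f$ maps $f^{n-1}(D)\subset U_{n-1}$ into $U_n\not\ni 0$), so $f^n(D)$ cannot acquire a hole around $0$ by winding --- it must do so by heavy folding. The idea is that $f^n|_D$ is a hyperbolic contraction, so that $f^n(D)$ has $\rho_{U_n}$-diameter at most the fixed number $\Delta$ (the $\rho_U$-diameter of $D$), while $\rho_{U_n}$ becomes extremely degenerate on the wide round annuli of \cite{jhZ02}; thus the size of $f^n(D)$ measured against the geometry of $U_n$ grows without bound, and one must show this forces $f^n(D)$ to separate $0$ from $\infty$ at the scale of $|w_n|$. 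Concretely, I would combine the contraction estimate with a lower bound for the oscillation of the harmonic function $\log|f^n|$ on $D$ --- itself a consequence of the orbit of $z_0$ passing through the wide annuli of the $U_j$ --- to show that $f^n(D)$ contains a full circle about $0$ at modulus comparable to $|w_n|$ and hence, lying in $U_n$, a round annulus $A(|w_n|^{1-\alpha_0},|w_n|^{1+\alpha_0})$; feeding this into the propagation step completes the proof.
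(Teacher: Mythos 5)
The paper does not prove this lemma itself: it is quoted verbatim from \cite[Theorem~1.2]{BRS11}, which, as the text says, ``strengthens an earlier result of Zheng \cite{jhZ02}''. So there is no in-paper argument to compare against, and the question is whether your blind proof is sound on its own.

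Your propagation step is essentially correct in outline. Given $f^n(D)\supset A(|w_n|^{1-\alpha_n},|w_n|^{1+\alpha_n})$, Theorem~\ref{Harnack} with $r=|w_n|^{1-\alpha_n}$ and $k=(1+\alpha_n)/(1-\alpha_n)$ does apply, because the circles in question lie in $U_n$ on which $|f|>1$ eventually; parts~(a) and~(b) together with Lemma~\ref{Had}(b) give $R\le|w_{n+1}|^{1-\alpha_{n+1}}$ and $R^K\ge|w_{n+1}|^{1+\alpha_{n+1}}$ with a loss $\alpha_n-\alpha_{n+1}=O(\delta(|w_n|^{1-\alpha_n}))$; and part~(a) plus $M(r)>r^9$ gives geometric growth of $\log|w_n|$, hence summability of the losses. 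Started far enough out, this would indeed keep $\alpha_n$ bounded below.

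The genuine gap is exactly where you flag it: securing the base case. The mechanism you sketch --- $f^n|_D$ is a hyperbolic contraction, $\rho_{U_n}$ degenerates on the wide annuli of \cite{jhZ02}, and $\log|f^n|$ has large oscillation on $D$ --- does not yield the conclusion that $f^n(D)$ contains a full circle about $0$. Two concrete obstructions: (i) in a round annulus of modulus $\mu$, a full radial crossing has hyperbolic length $\asymp\pi$ while a full angular loop has hyperbolic length $\asymp 2\pi^2/\mu\to 0$; hence a thin radial slit across the annulus has bounded hyperbolic diameter and large radial (i.e.\ $\log$-modulus) extent without separating $0$ from $\infty$, so ``bounded $\rho_{U_n}$-diameter $+$ large oscillation of $\log|f^n|$'' is consistent with $f^n(D)$ not wrapping. (ii) Since $f^n$ is zero-free on $D$, as you note, one can write $f^n=e^{g}$ with $g$ analytic on $D$; wrapping is equivalent to $\operatorname{Im}g(D)$ covering an interval of length $2\pi$ at some fixed $\operatorname{Re}g$. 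Your oscillation bound controls $\operatorname{Re}g=\log|f^n|$, but a bounded-oscillation control on the real part of an analytic function on a disc does not force comparable oscillation of its imaginary part (consider $g$ mapping a slightly smaller disc into a long thin horizontal strip), so the passage from radial spread to angular spread is unjustified. This base case --- showing the orbit of a small disc eventually separates $0$ from $\infty$ --- is precisely the hard technical content of the BRS11 theorem, and your sketch does not supply the missing idea.
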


We use Lemma~\ref{large-annuli} to prove a general result about the existence of sequences of absorbing annuli in such {\mconn} Fatou components, which in a sense strengthens the results in~\cite{BRS11}. Recall that $\delta(r) =1/\sqrt{\log r}$.

\begin{lemma}\label{absorb}
Let $f$ be a {\tef} with a {\mconn} Fatou component. Then there exist sequences $(r_n)$ and $(k_n)$, with $r_n > 0$ and $k_n>1$, for $n\ge 0$, such that the annuli
\begin{equation}\label{An}
A_n=A(r_n,r_n^{k_n}),\;\;n\ge 0,
\end{equation}
and
\begin{equation}\label{A'n}
A'_n=A(r_n^{1+6\pi\delta_n},r_n^{k_n(1-6\pi\delta_n)}),\;\;\text{where }\delta_n=\delta(r_n),\;n\ge 0,
\end{equation}
have the following properties.
\begin{itemize}
\item[(a)] For $n\ge 0$,
\begin{equation}\label{propn-3}
f(A_n)\supset A_{n+1}
\end{equation}
and
\begin{equation}\label{propn-4}
f(A'_n)\subset A'_{n+1}.
\end{equation}
\item[(b)]
 The annuli $A'_n$, $n\ge 0$, lie in distinct {\mconn} Fatou components~$U_n$ of $f$ and $f(U_n)=U_{n+1}$, for $n\ge 0$.
\item[(c)] For $n\ge 0$,
\begin{equation}\label{propn-1}
r_{n+1}= M(r_n) > r_n^{16},
\end{equation}
\begin{equation}\label{propn-1a}
r_{n+1}^{k_{n+1}}=M(r_n^{k_n-2\delta_n})^{1-2\pi\delta_n},
\end{equation}
and
\begin{equation}\label{propn-2}
1+20\delta_0\le k_0<\frac{\log r_1}{\log r_0},\;\;k_{n+1}\ge k_n(1-9\delta_n)\ge 1+20\delta_{n+1} \;\;\text{and}\;\; r_n^{k_n}<r_{n+1}.
\end{equation}
\end{itemize}
\end{lemma}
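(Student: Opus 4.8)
The plan is to build the sequences $(r_n)$ and $(k_n)$ inductively, feeding the output of Theorem~\ref{Harnack} at scale $r_n$ into its hypotheses at scale $r_{n+1}=M(r_n)$. First I would fix a {\mconn} Fatou component $U$ of $f$, pick $z_0\in U$ and a small open disc $D\subset U$ around $z_0$, and apply Lemma~\ref{large-annuli} to obtain $\alpha>0$ so that $f^n(D)\supset A(|f^n(z_0)|^{1-\alpha},|f^n(z_0)|^{1+\alpha})$ for all large $n$. Passing to a tail, relabelling, and writing $\rho_n=|f^n(z_0)|$, this gives a sequence of annuli $A(\rho_n^{1-\alpha},\rho_n^{1+\alpha})$ that are mapped into one another by $f$ and lie in the nested components $U_n=f^n(U)$. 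The point of this step is twofold: it supplies the ``inside'' annuli $A'_n$ of part~(b) essentially for free (they will sit inside these large annuli, hence inside the $U_n$), and, crucially, since $U_n$ contains $A(\rho_n^{1-\alpha},\rho_n^{1+\alpha})$, the function $f$ has \emph{no zeros} on a sub-annulus of comparable logarithmic width, and its minimum modulus there is large --- this is exactly the hypothesis \eqref{mbig} needed to invoke Theorem~\ref{Harnack}.

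Next I would set up the induction. Choose $r_0$ large (so that $r_0\ge R_1$, $M(r_0)>r_0^{16}$, and $\delta_0=\delta(r_0)$ is as small as we need), and choose $k_0$ with $1+20\delta_0\le k_0<\log r_1/\log r_0$, which is possible because $\log M(r_0)/\log r_0\to\infty$ by Lemma~\ref{Had}(a); this is the first inequality in \eqref{propn-2}. The minimum-modulus condition \eqref{mbig} at scale $r_0$ with exponent $k_0$ holds provided the interval $(r_0^{1+\delta_0},r_0^{k_0-\delta_0})$ lies inside the zero-free annulus coming from Lemma~\ref{large-annuli}; arranging $k_0$ (and all later $k_n$) to stay below the width $1+\alpha$ of those annuli, suitably transported, secures this. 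Given $r_n$ and $k_n$ satisfying $r_n\ge R_1$, \eqref{Mr9}, \eqref{s-larger}, and $1+20\delta_n\le k_n$, I then \emph{define} $r_{n+1}=M(r_n)$ and $r_{n+1}^{k_{n+1}}=M(r_n^{k_n-2\delta_n})^{1-2\pi\delta_n}$, i.e.\ \eqref{propn-1} and \eqref{propn-1a}. Theorem~\ref{Harnack}(b) immediately yields $f(A_n)\supset A(r_{n+1},r_{n+1}^{k_{n+1}})=A_{n+1}$, which is \eqref{propn-3}, together with $r_{n+1}=M(r_n)>r_n^9$; upgrading $9$ to $16$ is a matter of taking $r_0$ large and using Lemma~\ref{Had}(b). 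Theorem~\ref{Harnack}(c) gives $f(A'_n)\subset A'_{n+1}$, which is \eqref{propn-4}; and the inequality $K(r_n)\ge k_n(1-9\delta_n)$ from \eqref{outside} is precisely $k_{n+1}\ge k_n(1-9\delta_n)$.

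The main obstacle --- and the heart of the induction --- is propagating the lower bound $k_n\ge 1+20\delta_n$: I must check that $k_{n+1}\ge k_n(1-9\delta_n)$ still dominates $1+20\delta_{n+1}$. This is where the rapid growth $r_{n+1}=M(r_n)>r_n^{16}$ does the work: it forces $\delta_{n+1}=\delta(r_{n+1})<\delta_n/4$ by the computation in \eqref{Sands}-style (indeed $\delta_n/\delta_{n+1}=\sqrt{\log r_{n+1}/\log r_n}>4$), so the ``loss'' factor $(1-9\delta_n)$ is more than compensated because the target $1+20\delta_{n+1}$ shrinks geometrically while $k_n$ stays bounded below by a constant exceeding $1$. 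Concretely, if $k_n=1+c_n$ with $c_n\ge 20\delta_n$, then $k_{n+1}-1\ge c_n(1-9\delta_n)-9\delta_n\ge c_n-(9+9/20)c_n\delta_n/\delta_n\cdot\delta_n$, and one checks this stays $\ge 20\delta_{n+1}$ once $\delta_0$ is small enough and using $\delta_{n+1}\le\delta_n/4$; the bookkeeping is routine but must be done carefully to close the loop. The remaining conditions needed at stage $n+1$ --- namely $r_{n+1}\ge R_1$, \eqref{Mr9}, and \eqref{s-larger} --- all follow from $r_{n+1}=M(r_n)\to\infty$ and $\delta_{n+1}\to 0$, so the induction runs. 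Finally, $r_n^{k_n}<r_{n+1}$ (the last assertion of \eqref{propn-2}) follows from \eqref{propn-1a}: $r_n^{k_n}<r_n^{k_n-2\delta_n}<M(r_n^{k_n-2\delta_n})^{1-2\pi\delta_n}$ would go the wrong way, so instead one uses $k_n<\log r_{n+1}/\log r_n$ at $n=0$ and propagates it via $k_{n+1}\log r_{n+1}=(1-2\pi\delta_n)\log M(r_n^{k_n-2\delta_n})<\log M(r_{n+1})=\log r_{n+2}$, invoking Lemma~\ref{Had}(b) and $k_n-2\delta_n<k_n<\log r_{n+1}/\log r_n$ once more. Part~(b) is then recorded by noting each $A'_n$ lies inside the large annulus from Lemma~\ref{large-annuli}, hence inside $U_n$, that these $U_n$ are distinct by property~(c) of \cite[Theorem~3.1]{iB84}, and that $f(U_n)=U_{n+1}$ by definition of the orbit.
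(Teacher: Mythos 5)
Your overall architecture matches the paper's: use Lemma~\ref{large-annuli} to seed the construction with a multiply connected component~$U$, and iterate Theorem~\ref{Harnack} with $r_{n+1}=M(r_n)$, $r_{n+1}^{k_{n+1}}=M(r_n^{k_n-2\delta_n})^{1-2\pi\delta_n}$, propagating $k_n\ge 1+20\delta_n$ via the rapid decay $\delta_{n+1}\le\delta_n/4$. That part is sound, as is your closing argument for $r_n^{k_n}<r_{n+1}$.

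However, there is a genuine gap in how you justify the minimum modulus hypothesis \eqref{mbig} (and the containment $A'_n\subset U_n$ for part~(b)) at the inductive stages $n\ge 1$. You argue that $A(r_n^{1+\delta_n},r_n^{k_n-\delta_n})$ will lie inside the Lemma~\ref{large-annuli} annulus $A(\rho_n^{1-\alpha},\rho_n^{1+\alpha})$ once $k_n$ is ``arranged to stay below $1+\alpha$, suitably transported.'' This cannot be made to work. First, the radii $r_n=M^n(r_0)$ are generated by iterating $M$, whereas $\rho_n=|f^n(z_0)|$ is the orbit of a point; there is no reason these track each other, so there is no reason the annuli $A_n$ sit inside the explicit Lemma~\ref{large-annuli} annuli for $n\ge 1$. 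Second, $k_n$ is not a free parameter: once $r_0,k_0$ are fixed, every $k_{n+1}=K(r_n,k_n)$ is forced by Theorem~\ref{Harnack}(b), and nothing prevents $k_n$ from exceeding $1+\alpha$. The paper avoids this entirely: after relabelling so that $A_0=A(r_0,r_0^{k_0})\subset U$ and (using Baker's property that ${\rm dist}(U_n,0)\to\infty$) $|f^n(z)|>1$ for all $z\in A_0$ and $n\ge 0$, one observes that the inductive hypothesis $f(A_j)\supset A_{j+1}$ for $j<n$ gives $A_n\subset f^n(A_0)$. Hence for any $w\in A_n$ there is $z\in A_0$ with $f^n(z)=w$, so $|f(w)|=|f^{n+1}(z)|>1$, which is exactly \eqref{mbig} at scale $r_n$; and $A_n\subset f^n(A_0)\subset f^n(U)=U_n$ gives part~(b) directly, with no need to compare against the $\rho_n$-annuli at all. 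You should replace the ``stay below $1+\alpha$'' mechanism with this pullback argument; without it, the inductive step does not close.

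A minor point: the displayed chain $k_{n+1}-1\ge c_n-(9+9/20)c_n\delta_n/\delta_n\cdot\delta_n$ is garbled and does not parse; the clean route is the product estimate $k_{n+1}\ge k_0\prod_{j=0}^{n}(1-9\delta_j)\ge(1+20\delta_0)(1-12\delta_0)\ge 1+5\delta_0\ge 1+20\delta_{n+1}$, using $\sum_j 9\delta_j\le 12\delta_0$ and $\delta_{n+1}\le\delta_0/4$.
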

\begin{proof}
By Lemma~\ref{large-annuli}, there exists a {\mconn} Fatou component $U$ of $f$, a point $z_0\in U$ and a constant $\alpha>0$ such that the Fatou components $U_n=f^n(U)$, $n\ge 0$, satisfy
\[
U_n\supset A(|f^n(z_0)|^{1-\alpha},|f^n(z_0)|^{1+\alpha}),\;\;\text{for }n\ge 0.
\]

Therefore, by replacing $U$ by $U_n$ for a suitably large $n$ and relabeling, we can take $r_0>0$ such that
\begin{equation}\label{M(r)16}
M(r)>r^{16},\;\;\text{for } r\ge r_0.
\end{equation}
\begin{equation}\label{A0}
U\supset A(r_0,r_0^{k_0}),
\end{equation}
where
\begin{equation}\label{delta0}
1+20\delta_0\le k_0<\frac{\log M(r_0)}{\log r_0}\;\;\text{and}\;\;\delta_0=\delta(r_0)\le \frac{1}{80},
\end{equation}
and also, by property~(c) before Lemma~\ref{large-annuli},
\begin{equation}\label{fnlarge}
|f^n(z)|>1,\;\;\text{for } z\in A(r_0,r_0^{k_0}),\;n\ge 0.
\end{equation}

Suppose now that for some $m\ge 0$ the sequences $r_0,\ldots,r_{m}$ and $k_0,\ldots,k_{m}$ have been chosen so that they satisfy \eqref{propn-3}, \eqref{propn-4}, \eqref{propn-1}, \eqref{propn-1a} and \eqref{propn-2}, for $n=0,\ldots, m-1$. We show that $r_{m+1}$ and $k_{m+1}$ can be chosen so that these properties also hold for $n=m$.

By~\eqref{propn-2}, \eqref{M(r)16} and the fact that $6\pi+1<20$, we can apply Theorem~\ref{Harnack} with $r=r_m$ and $k=k_m$ to deduce that
\begin{equation}\label{contains-RK-0}
f\left(A_m\right)\supset A\left(R,R^{K}\right),
\end{equation}
where
\begin{equation}\label{outside-0}
R=M(r_m)>r_{m}^{16},\;\;R^K=M(r_m^{k_m-2\delta_m})^{1-2\pi\delta_m}\;\;\text{and}\;\; K\ge k_{m}(1-9\delta_m),
\end{equation}
and
\begin{equation}\label{inside-0}
f(A(r_m^{1+6\pi\delta_m},r_m^{k_m(1-6\pi\delta_m)}))\subset A\left(R^{1+6\pi\delta(R)},R^{K(1-6\pi\delta(R))}\right).
\end{equation}

We now take
\[
r_{m+1}=R,\;\; \delta_{m+1}=\delta(R)\;\;\text{and}\;\; k_{m+1}=K.
\]
Then, by \eqref{contains-RK-0},
\[
f\left(A_m\right)\supset A\left(r_{m+1},r_{m+1}^{k_{m+1}}\right),
\]
by~\eqref{outside-0},
\[
k_{m+1}\ge k_m(1-9\delta_m),
\]
and, by \eqref{M(r)16},
\[
\frac{\delta_j}{\delta_{j+1}}=\frac{\sqrt{\log r_{j+1}}}{\sqrt{\log r_j}}=\sqrt{\frac{\log M(r_j)}{\log r_j}}\ge 4,\;\;\text{for }j=0,\ldots,m.
\]
Therefore, by \eqref{delta0},
\[
k_{m+1} \ge (1+20\delta_0)\prod_{j=0}^m(1-9\delta_j)\ge (1+20\delta_0)(1-12\delta_0)\ge 1+5\delta_0\ge 1+20\delta_{m+1}.
\]
Finally, the inequality $r_m^{k_m}<M(r_m)=r_{m+1}$ holds in the case $m=0$, by \eqref{delta0}, and we deduce it in the case $m\ge 1$ from \eqref{propn-1a} and \eqref{propn-2} with $n=m-1$, since these imply that
\[
r_m^{k_m}\le M(r_{m-1}^{k_{m-1}})<M(r_m)=r_{m+1}.
\]
We have now checked that \eqref{propn-3}, \eqref{propn-4}, \eqref{propn-1}, \eqref{propn-1a} and \eqref{propn-2} all hold with $n=m$, and by induction this completes the proofs of parts~(a) and~(c).

To prove part~(b), note that \eqref{An}, \eqref{A'n}, \eqref{propn-3} and \eqref{A0} imply that for $n\ge 0$ we have $A'_n\subset A_n\subset f^n(U)$, and the sets $U_n=f^n(U)$ form disjoint Fatou components by the theorem of Baker \cite[Theorem~3.1]{iB84} mentioned before Lemma~\ref{large-annuli}.
\end{proof}
We now use Lemma~\ref{absorb} to prove the stronger version of Theorem~\ref{annuli} that holds when $f$ has a {\mconn} Fatou component. Here, for a closed annulus $B=\overline A(r,s)$, $0<r<s$, we denote by $\partial_{\rm inn}B$ and $\partial_{\rm out}B$ the inner and outer boundary components of $B$, respectively.
\begin{theorem}\label{annuli-MC}
Let $f$ be a {\tef} with a {\mconn} Fatou component. Then there exists $R=R(f)>0$, a
sequence $(U_n)$ of distinct {\mconn} Fatou components of $f$ such that
$f(U_n)=U_{n+1}$, for $n\ge 0$, and a sequence of closed annuli
\[
B_n=\overline A(t_n,t'_n),\;\;n\ge 0,
\]
such that
\[
\partial_{\rm inn}B_n\subset U_n\;\;\text{and}\;\;\partial_{\rm out} B_n\subset
U_{n+1},\;\;\text{for } n\ge 0,
\]
so each $B_n$ meets $J(f)$,
\begin{equation}\label{prop1-b}
[t_{n+1},t'_{n+1}]\subset [M(t_n),M(t'_n)],\;\;\text{for } n\ge 0,
\end{equation}
\begin{equation}\label{prop1-bb}
B_n\subset A_n(R),\;\;\text{for } n\ge 0,
\end{equation}
\begin{equation}\label{prop3-b}
f(B_n)\supset B_{n+1},\;\;\text{for } n\ge 0,
\end{equation}
and there is a subsequence $(B_{n_j})$ such that
\begin{equation}\label{subseq1}
f(B_{n_j})\supset B_{n},\;\;\text{for }j\in\N,\;0\le n\le n_j.
\end{equation}

Also, the sequence $(n_j)$ consists of those~$n\ge 0$ such that $A_n(R)$ contains a zero of $f$.
\end{theorem}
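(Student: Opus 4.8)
The plan is to derive Theorem~\ref{annuli-MC} from Lemma~\ref{absorb}, with the annuli $B_n$ built so as to straddle the \mconn\ Fatou components $U_n$ and $U_{n+1}$ produced by that lemma.

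\smallskip
\emph{Set-up.} First I would apply Lemma~\ref{absorb} to obtain the sequences $(r_n)$, $(k_n)$, the annuli $A_n=A(r_n,r_n^{k_n})$, $A'_n$, and the distinct \mconn\ Fatou components $U_n$ with $f(U_n)=U_{n+1}$ and $A'_n\subset A_n\subset U_n$. By replacing $U$ by a forward image $U_N$ (equivalently, by taking $r_0$ large) I may assume, using property~(c) before Lemma~\ref{large-annuli}, that $\operatorname{dist}(U_n,0)>1$ for all $n\ge 0$ and that $f$ has a zero in $\{z:|z|<r_0\}$. Two consequences will be used repeatedly: $f$ has no zeros in any $U_n$ (since $0$ lies in a bounded complementary component of $U_{n+1}$), in particular none in $A_n$ or $A_{n+1}$; and if the whole circle $\{|z|=s\}$ lies in $A_n$, i.e. $r_n<s<r_n^{k_n}$, then its image lies in $U_{n+1}$, so $m(s)>1$, whence Theorem~\ref{Harnack}(a) applies with $r=r_n$, $k=k_n$ and gives $\log m(s)\ge (1-2\pi\delta_n)\log M(s)$ for $s\in[r_n^{1+2\delta_n},r_n^{k_n-2\delta_n}]$. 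One further flexibility of Lemma~\ref{absorb} is crucial: $k_0$, hence every $k_n$, may be taken as close as desired to its maximal admissible value $\log M(r_0)/\log r_0$, so that the outer radii $r_n^{k_n}$ may be taken close, on the logarithmic scale, to $M(r_n)=r_{n+1}$.

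\smallskip
\emph{The value $R$ and the annuli $B_n$.} I would set $R=r_1^{1+c\delta_1}$ for a suitable constant $c$ and $t_n=M^{n-1}(R)$ for $n\ge 1$ (with a slightly smaller companion value $t_0$). Using Lemma~\ref{Had}(b) together with \eqref{propn-1}--\eqref{propn-2}, and the fact that the $k_n$ are near-maximal, one checks that $t_n\in(r_n,r_n^{k_n})$ for every $n$, so $\{|z|=t_n\}$ is a circle in $A_n\subset U_n$; that $t_{n+1}=M(t_n)$; and that $t_n\le r_n^{(k_n-2\delta_n)(1-2\pi\delta_n)}$, so $t_{n+1}=M(t_n)\le r_{n+1}^{k_{n+1}}$ by \eqref{propn-1a}. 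Next pick $t'_n=r_{n+1}^{1+c'\delta_{n+1}}$, with $c'$ chosen so that $t'_n$ lies in the range $[r_{n+1}^{1+2\delta_{n+1}},r_{n+1}^{k_{n+1}-2\delta_{n+1}}]$ of Theorem~\ref{Harnack}(a) (hence $\{|z|=t'_n\}\subset A_{n+1}\subset U_{n+1}$) and so that $r_{n+1}<t'_n<\min\{t_{n+1},M(t_n)\}$ and $t'_{n+1}\le m(t'_n)$. Put $B_n=\overline A(t_n,t'_n)$. Then $\partial_{\rm inn}B_n\subset U_n$ and $\partial_{\rm out}B_n\subset U_{n+1}$, so $B_n$ is connected with its two boundary circles in distinct Fatou components and hence meets $J(f)$. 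Since $t_n=M^{n-1}(R)$, $t_{n+1}=M^n(R)$ and $t'_n<t_{n+1}$, we get $B_n\subset[M^{n-1}(R),M^n(R))=A_n(R)$, which is \eqref{prop1-bb}; taking $t'_0<R$ gives $B_0\subset A_0(R)$. Finally $[t_{n+1},t'_{n+1}]\subset[M(t_n),M(t'_n)]$ since $M(t_n)=t_{n+1}$ and $t'_{n+1}\le m(t'_n)\le M(t'_n)$, which is \eqref{prop1-b}.

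\smallskip
\emph{The covering properties.} On $\partial_{\rm inn}B_n$ we have $|f|\le M(t_n)$, on $\partial_{\rm out}B_n$ we have $|f|\ge m(t'_n)$, and $f$ has a zero in $\{|z|<t_n\}$; so by the argument principle, for any $w$ with $M(t_n)<|w|<m(t'_n)$ the equation $f(z)=w$ has, in $B_n$, exactly as many solutions as $f$ has zeros in $\{|z|<t'_n\}$, which is a positive number. Hence $f(B_n)\supset A(M(t_n),m(t'_n))\supset[t_{n+1},t'_{n+1}]$, that is, $f(B_n)\supset B_{n+1}$, which is \eqref{prop3-b}. Now let $(n_j)$ be the sequence of those $n\ge 0$ for which $A_n(R)$ contains a zero of $f$; fix $n=n_j$ and a zero $\zeta_0$ of $f$ in $A_n(R)=[t_n,M^n(R))$. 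Since $f$ has no zeros in $A_{n+1}=(r_{n+1},r_{n+1}^{k_{n+1}})$ and $M^n(R)=t_{n+1}\le r_{n+1}^{k_{n+1}}$, we get $|\zeta_0|<r_{n+1}<t'_n$, so $\zeta_0\in B_n$. Choosing $\rho$ slightly larger than $|\zeta_0|$ makes $m(\rho)<t_0$ while $f(\{|z|=\rho\})$ still winds about $0$; repeating the argument-principle reasoning on $\overline A(\rho,t'_n)\subset B_n$ gives $f(B_n)\supset A(m(\rho),m(t'_n))$. As $m(\rho)<t_0\le t_m$ and, for $0\le m\le n$, $t'_m<r_{m+1}^{k_{m+1}}<r_{m+2}\le r_{n+2}\le m(t'_n)$, this annulus contains $B_m=[t_m,t'_m]$ for every such $m$; thus $f(B_{n_j})\supset B_n$ for $0\le n\le n_j$, which is \eqref{subseq1}, and $(n_j)$ is by construction exactly the set of $n$ for which $A_n(R)$ contains a zero of $f$.

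\smallskip
I expect the main obstacle to be the simultaneous calibration in the second step: one must choose the single value $R$ and the radii $(t_n)$, $(t'_n)$ so that at once (i) each $B_n$ straddles $U_n$ and $U_{n+1}$, which forces $t_n$ into $A_n$ and $t'_n$ into $A_{n+1}$, hence $t_n$ roughly of size $r_n$ and $t'_n$ roughly of size $r_{n+1}$; (ii) each $B_n$ fits inside the single partition annulus $A_n(R)$, which forces the inner radii to form an exact orbit $t_{n+1}=M(t_n)$ passing through every gap $[r_n^{k_n},r_{n+1}]$; and (iii) the covering $f(B_n)\supset B_{n+1}$ survives. Reconciling (i) and (ii) is the crux, and is possible only because Lemma~\ref{absorb} allows the $k_n$ to be taken near-maximal, so that those gaps are thin on the logarithmic scale and the $M$-orbit of a value $R$ just above $r_1$ can thread through all of them; carrying this out rigorously is a matter of tracking logarithms of radii and repeatedly invoking \eqref{propn-1}--\eqref{propn-2} and Hadamard convexity.
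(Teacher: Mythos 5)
Your set-up is right: like the paper you derive the theorem from Lemma~\ref{absorb}, and the idea of making each $B_n$ straddle $U_n$ and $U_{n+1}$ is exactly what happens in the published proof. But the way you calibrate $R$ and the $t_n$ is genuinely different from the paper's, and it does not work.

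\emph{The fatal step.} You set $t_n=M^{n-1}(R)$ and then require $t_n\in(r_n,r_n^{k_n})$ (indeed in the tighter range $t_n\le r_n^{(k_n-2\delta_n)(1-2\pi\delta_n)}$) for every $n$, and you justify this by claiming that ``$k_0$, hence every $k_n$, may be taken as close as desired to its maximal admissible value $\log M(r_0)/\log r_0$.'' This is false on two counts. First, $k_0$ is \emph{not} free to approach $\log M(r_0)/\log r_0$: besides the inequality $k_0<\log M(r_0)/\log r_0$ there is the constraint $A(r_0,r_0^{k_0})\subset U$, and Lemma~\ref{large-annuli} only guarantees an annulus of logarithmic thickness governed by a fixed $\alpha$, so $k_0$ is bounded by a quantity of order $(1+\alpha)/(1-\alpha)$ while $\log M(r_0)/\log r_0\to\infty$. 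Second, even granting a large $k_0$, near-maximality is not preserved by \eqref{propn-1a}: $r_{n+1}^{k_{n+1}}=M(r_n^{k_n-2\delta_n})^{1-2\pi\delta_n}$ is compared with $r_{n+2}=M(r_{n+1})=M(M(r_n))$ via the Hadamard inequality $M(r^a)\ge M(r)^a$, which is typically extremely lossy; the logarithmic gap between $r_n^{k_n}$ and $r_{n+1}$ can grow rapidly with $n$. The paper avoids the whole problem by \emph{not} taking $t_n$ to be an $M$-orbit: it puts $t_n=r_n^{k_n(1-6\pi\delta_n)}$, $t'_n=r_{n+1}^{1+6\pi\delta_{n+1}}$ (so $B_n$ is literally the gap between $A'_n$ and $A'_{n+1}$), proves $[t_{n+1},t'_{n+1}]\subset[M(t_n),M(t'_n)]$ \emph{dynamically} from $f(A'_n)\subset A'_{n+1}$, and then only at the very end picks one $R\in(t'_0,t_1)$ and lets $M^{n-1}(R)$ thread the gaps between consecutive $B_n$; no near-maximality, and no control of $\log M^{n-1}(R)/\log r_n$, is needed.

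\emph{Two further gaps.} (i) Your covering argument for \eqref{subseq1} claims $f\left(\overline A(\rho,t'_n)\right)\supset A\left(m(\rho),m(t'_n)\right)$ by the argument principle. This is not what the argument principle gives: for $w$ with $m(\rho)<|w|<M(\rho)$ the winding number of $f(\{|z|=\rho\})$ about $w$ is not controlled, so the preimage count in $A(\rho,t'_n)$ may well be zero (for $|w|<m(\rho)$ it equals the number of zeros of $f$ strictly inside $A(\rho,t'_n)$, which need not be positive). The paper sidesteps this entirely via a topological argument with the thickened annuli $B'_n=A'_n\cup B_n\cup A'_{n+1}$ and the dichotomy $f(B'_n)\subset B'_{n+1}$ or $f(B'_n)\supset F_{n+1}$, together with Montel's theorem to show the second alternative occurs infinitely often. (ii) You assume outright that $f$ has a zero in $\{|z|<r_0\}$ (indeed, implicitly, that it has infinitely many zeros so that $(n_j)$ is infinite). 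This is true for transcendental entire functions with multiply connected Fatou components, but in the paper this is a \emph{conclusion} of the Montel argument, not an input; as written it is an unjustified assumption. Your self-assessment in the last paragraph correctly identifies the calibration as the crux, but the mechanism you propose to resolve it does not exist, so the proof as it stands has a genuine gap.
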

\begin{proof}
Let $(r_n)$, $(k_n)$, $(A_n)$, $(A'_n)$ and $(U_n)$ be the sequences given by
Lemma~\ref{absorb}; in particular,
\begin{equation}\label{A-sub}
A'_n\subset U_n\;\;\text{and}\;\; f(A'_n) \subset A'_{n+1},\;\;\text{for }n\ge 0,
\end{equation}
by \eqref{propn-4}.

Now define the sequence of closed annuli $(B_n)$ as follows:
\[
B_n=\overline A(t_n,t'_n),\;\;\text{for }n\ge 0,
\]
where
\begin{equation}\label{s-def}
t_n=r_n^{k_n(1-6\pi\delta_n)}\;\;\text{and}\;\;
t'_n=r_{n+1}^{1+6\pi\delta_{n+1}},\;\;\text{for }n\ge 0;
\end{equation}
that is, $B_n$ is the annulus lying between $A'_n$ and $A'_{n+1}$. Since $A'_n$ and $A'_{n+1}$ lie in distinct Fatou components, it follows that $B_n\cap J(f)\ne \emptyset$, for $n\ge 0$.

Also, by \eqref{A-sub}, we have
\begin{equation}\label{boundary-Bn}
f(\partial_{\rm inn}B_n)\subset \overline{A'_{n+1}}\;\;\text{and}\;\;f(\partial_{\rm out}B_n)\subset \overline{A'_{n+2}},\;\;\text{for }n\ge 0,
\end{equation}
so
\[
f(B_n)\supset B_{n+1},\;\;\text{for }n\ge 0,
\]
which proves \eqref{prop3-b}.

We also have in this case, by \eqref{A'n} and \eqref{boundary-Bn}, that
\begin{equation}\label{t-t'-a}
t'_n< M(t_n)\le r_{n+1}^{k_{n+1}(1-6\pi\delta_{n+1})}=t_{n+1},\;\;\text{for } n\ge 0,
\end{equation}
and, by \eqref{propn-2}, \eqref{s-def} and \eqref{boundary-Bn}, that
\begin{equation}\label{t-t'-b}
t_{n+1}<r_{n+1}^{k_{n+1}}<M(r_{n+1})=r_{n+2}<t'_{n+1}<M(t'_n),\;\;\text{for } n\ge 0.
\end{equation}
Together, \eqref{t-t'-a} and \eqref{t-t'-b} prove \eqref{prop1-b}.

Now \eqref{t-t'-a} implies that there exists $R=R(f)>0$ such that
\[
t'_0<R<t_1<t'_1<M(R),
\]
so $B_0\subset A_0(R)$ and $B_1\subset A_1(R)$, and we deduce by \eqref{prop1-b} that
\[
M^{n-1}(R)<t_n<t'_n<M^n(R),\;\;\text{so}\;\;B_n\subset A_n(R),\;\;\text{for }n\in\N,
\]
which proves \eqref{prop1-bb}.

To prove the final statement, let
\[
B'_n=A'_n\cup B_n\cup A'_{n+1}, \;\;n\ge 0,
\]
and let $F_n$ denote the component of $\C\setminus B'_n$ that contains~0. Then, by \eqref{A-sub},
\[
\partial f(B'_n)\subset f(\partial B'_n)\subset B'_{n+1}, \;\;\text{for } n\ge 0,
\]
and hence, for each $n\ge 0$, we have exactly one of the following possibilities:
\begin{equation}\label{onto}
f(B'_n)\subset B'_{n+1},
\end{equation}
or
\begin{equation}\label{notonto}
f(B'_n)\supset F_{n+1},\;\;\text{so }\; f(B_n)\supset F_{n+1}.
\end{equation}

If~(\ref{onto}) holds for all $n\ge N$, say, then each $B'_n$, $n\ge N$, is contained in the Fatou set of $f$, by Montel's theorem, and this contradicts the fact that each $B_n$ and hence each $B'_n$ meets $J(f)$. Thus there is a strictly increasing sequence $n_j\ge 0$ such that~(\ref{notonto}) holds for all $n=n_j$, $j\in\N$, which gives \eqref{subseq1}. By replacing~$R$ by $M^n(R)$ for some $n$, we can assume that $n_0=0$. By \eqref{onto} and \eqref{notonto}, this sequence $(n_j)$ consists of those~$n\ge 0$ such that $B_n$ contains a zero of~$f$. Hence, by \eqref{prop1-bb} and \eqref{A-sub}, this sequence consists of those $n\ge 0$ such that $A_n(R)$ contains a zero of~$f$.
\end{proof}

\section{Proof of Theorem~\ref{annuli}: no {\mconn} Fatou components}\label{noMC}
\setcounter{equation}{0}

The proof of Theorem~\ref{annuli} in the case of {\it no} {\mconn} Fatou components is somewhat more complicated and is based on the following lemma. Recall again that $\delta(r)=1/\sqrt{\log r}$.

\begin{lemma}\label{annuli-N}
Let $f$ be a {\tef} with no {\mconn} Fatou components. There exists $R_3=R_3(f)>0$ such that for all $r_0\ge R_3$ and $k_0=1+20\delta(r_0)$ there exist finite sequences $r_n>0$ and $k_n>1$, $n=0,\ldots,N$, where $N\ge 0$, such that the annuli
\[
A_n=A(r_n,r_n^{k_n}),\;\;n=0,\ldots, N,
\]
and
\[
A'_n=A(r_n^{1+6\pi\delta_n},r_n^{k_n(1-6\pi\delta_n)}),\;\;\text{where }\delta_n=\delta(r_n),\;n=0,\ldots, N,
\]
have the following properties.
\begin{itemize}
\item[(a)] For $n=0,\ldots,N-1$,
\begin{equation}\label{prop-3}
f(A_n)\supset A_{n+1}
\end{equation}
and
\begin{equation}\label{prop-4}
f(A'_n)\subset A'_{n+1}.
\end{equation}
\item[(b)]
For any $S,S',T,T'$ that satisfy
\begin{equation}\label{R-conds-1}
2<S<S',\;\;T<T'\le M(\tfrac13 r_{N}^{1+\delta_N})\;\;\text {and }\;S'\le \tfrac12 T,
\end{equation}
we have
\[
f(A_{N})\;\;\text{contains}\;\;  A(S,S')\;\;
\text{or}\;\; A(T,T').
\]
\item[(c)] For $n=0,\ldots,N-1$,
\begin{equation}\label{prop-1}
r_{n+1}= M(r_n) > r_n^{16},
\end{equation}
\begin{equation}\label{prop-1a}
r_{n+1}^{k_{n+1}}=M(r_n^{k_n-2\delta_n})^{1-2\pi\delta_n},
\end{equation}
\begin{equation}\label{prop-2}
k_{n+1}\ge k_n(1-9\delta_n)\ge 1+5\delta_0 \ge 1+20\delta_{n+1},
\end{equation}
and
\begin{equation}\label{prop-2a}
M(r_n^{k_n})>r_{n+1}^{k_{n+1}}>r_{n+1}=M(r_n)>(r_n^{k_n})^2.
\end{equation}
Also,
\begin{equation}\label{prop-2b}
M(r_N)>(r_N^{k_N})^2.
\end{equation}
\end{itemize}
\end{lemma}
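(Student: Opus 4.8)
\emph{Plan.} The idea is to run the inductive construction from the proof of Lemma~\ref{absorb}, but only while the hypothesis~\eqref{mbig} of Theorem~\ref{Harnack} remains valid, and to stop at the first stage where it fails. Fix $R_3=R_3(f)$ so large that $R_3\ge R_1$, that $R_3\ge 3R_2$ (with $R_2$ as in Corollary~\ref{Bohr}), that $M(r)>r^{16}$ for all $r\ge R_3$ (possible by Lemma~\ref{Had} part~(a)), and that $\delta(r)$ is small enough for $r\ge R_3$ to justify the estimates below. Given $r_0\ge R_3$ and $k_0=1+20\delta(r_0)$, let $N\ge 0$ be the least integer $n$ for which~\eqref{mbig} \emph{fails} at $r=r_n$, that is, for which there is some $s_*\in(r_n^{1+\delta_n},r_n^{k_n-\delta_n})$ with $m(s_*)\le 1$. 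The existence of such an $N$ is the crux of the proof and is addressed in the final paragraph.

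\emph{Construction for $n<N$.} For $n=0,\ldots,N-1$ the hypothesis~\eqref{mbig} holds at $r_n$, and since $M(r_n)>r_n^{16}$ and $k_n-1\ge 20\delta_n$ the conditions~\eqref{Mr9} and~\eqref{s-larger} hold; so I would apply Theorem~\ref{Harnack} with $r=r_n$, $k=k_n$ and set $r_{n+1}=M(r_n)$, $k_{n+1}=K(r_n)$. Parts~(b) and~(c) of Theorem~\ref{Harnack} then give~\eqref{prop-3}, \eqref{prop-4}, \eqref{prop-1}, \eqref{prop-1a}, and~\eqref{prop-2} follows from
\[
k_{n+1}\ge k_n(1-9\delta_n)\ge k_0\prod_{j=0}^{n}(1-9\delta_j)\ge(1+20\delta_0)(1-12\delta_0)\ge 1+5\delta_0\ge 1+20\delta_{n+1},
\]
using $\sum_{j}\delta_j\le\tfrac{4}{3}\delta_0$ and $\delta_{n+1}\le\tfrac{1}{4}\delta_0$, both consequences of $\delta_{j+1}\le\tfrac{1}{4}\delta_j$ (which holds since $M(r_j)>r_j^{16}$). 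The one new ingredient compared with Lemma~\ref{absorb} is to carry the invariant $(r_n^{k_n})^2<M(r_n)=r_{n+1}$ along the induction: it holds at $n=0$ because $2k_0\log r_0<4\log r_0<16\log r_0<\log M(r_0)$, and it propagates since, using $r_n^{k_n}<r_{n+1}^{1/2}$ and Lemma~\ref{Had} part~(b),
\[
r_{n+1}^{k_{n+1}}=M(r_n^{k_n-2\delta_n})^{1-2\pi\delta_n}\le M(r_n^{k_n})\le M(r_{n+1}^{1/2})\le M(r_{n+1})^{1/2}=r_{n+2}^{1/2}.
\]
This invariant is precisely~\eqref{prop-2b} at $n=N$, and together with $r_{n+1}^{k_{n+1}}<M(r_n^{k_n})$ and $k_{n+1}>1$ it gives the whole of~\eqref{prop-2a}. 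This proves parts~(a) and~(c).

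\emph{Part~(b).} Since~\eqref{mbig} fails at $r_N$, pick $s_*\in(r_N^{1+\delta_N},r_N^{k_N-\delta_N})$ with $m(s_*)\le 1$ and put $\rho=s_*/3$. Then $s_*\in(2\rho,4\rho)$, so~\eqref{msmall} holds at $r=\rho$, and $\rho>\tfrac{1}{3}r_N^{1+\delta_N}>R_3/3\ge R_2$, so Corollary~\ref{Bohr} applies at $r=\rho$. Because $r_N^{\delta_N}=e^{\sqrt{\log r_N}}$ is large, we have $r_N<\rho$ and $8\rho<\tfrac{8}{3}r_N^{k_N-\delta_N}<r_N^{k_N}$, hence $A(\rho,8\rho)\subset A_N$ and therefore $f(A_N)\supset f(A(\rho,8\rho))$. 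Finally $M(\rho)\ge M(\tfrac{1}{3}r_N^{1+\delta_N})$, so every quadruple $S,S',T,T'$ satisfying~\eqref{R-conds-1} also satisfies~\eqref{Rconds} at $r=\rho$, and Corollary~\ref{Bohr} gives that $f(A_N)$ contains $A(S,S')$ or $A(T,T')$. (If $N=0$ the construction is empty and only this step is required.)

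\emph{The main obstacle: finiteness of $N$.} Suppose the construction could be continued indefinitely, producing infinite sequences $(r_n)$, $(k_n)$ and annuli $A_n$, $A'_n$ with $f(A'_n)\subset A'_{n+1}$ for all $n\ge 0$. Then $f^m(A'_n)\subset A'_{n+m}$ for every $m\ge 0$, and since $A'_{n+m}$ omits the disc $\{|z|\le r_{n+m}^{1+6\pi\delta_{n+m}}\}\supseteq\{|z|\le r_n\}$, the iterates $f^m$ omit the values $0$ and $1$ on $A'_n$, so $A'_n\subset F(f)$ by Montel's theorem. As $A'_n$ is a round annulus centred at~$0$, the Fatou component $V_n$ containing it is either {\mconn}, which is impossible by hypothesis, or {\sconn}, in which case any round circle inside $A'_n$ bounds a disc contained in $V_n$; hence $0\in V_n$ and $V_n$ contains a disc about~$0$ of radius at least $r_n$. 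Since $0$ lies in a unique Fatou component, all the $V_n$ coincide with one component $V$, and then $V\supseteq\bigcup_{n}\{|z|<r_n\}=\C$, forcing $J(f)=\emptyset$, which is impossible for a {\tef}. Therefore $N$ is finite and the construction above is well defined.
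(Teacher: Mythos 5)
Your proof is correct and follows essentially the same strategy as the paper: run the absorbing-annuli induction via Theorem~\ref{Harnack} until the minimum-modulus hypothesis~\eqref{mbig} first fails, and at the point of failure invoke Corollary~\ref{Bohr} with $r=s_*/3$ to obtain part~(b). The one place where you go beyond the paper's text is the finiteness of~$N$: the paper disposes of this in a single line (``property~\eqref{prop-4} would then imply that $f$ has multiply connected Fatou components''), whereas you supply an explicit justification via Montel's theorem together with the observation that a simply connected Fatou component containing round circles about~$0$ of unbounded radius would have to be all of~$\C$ — a correct filling-in of a step the paper takes as known.
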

\begin{proof}
We construct the annuli $A_n$ by using Theorem~\ref{Harnack} in a way that is similar to the proof of Lemma~\ref{absorb} but rather more complicated.

We take $R_3\ge \max\{R_1^2,R_2\}$, where $R_1=R_1(f)$ is the constant in Lemma~\ref{Had} and $R_2=R_2(f)$ is the constant in Corollary~\ref{Bohr}, and also such that
\begin{equation}\label{M(r)}
M(r)> r^{16}\;\;\text{and}\;\;\sqrt{\log r}\ge 80,\;\;\text{for }r \ge R_3.
\end{equation}
Given $r_0\ge R_3$, we define the annulus $A_0=A(r_0,r_0^{k_0})$, where $k_0=1+20\delta_0$ with $\delta_0=\delta(r_0)$. Suppose that, for some $m\ge 0$, the sequences $r_0, \ldots, r_{m}$ and $k_0, \ldots, k_{m}$ have been chosen so that they satisfy \eqref{prop-3}, \eqref{prop-4}, \eqref{prop-1}, \eqref{prop-1a}, \eqref{prop-2} and \eqref{prop-2a}, for $n=0,\ldots, m-1$.

We now show that if the condition
\begin{equation}\label{m-big}
m(s)> 1,\quad\text{for all } s\in
(r_{m}^{1+\delta_{m}},r_{m}^{k_{m}-\delta_{m}}),
\end{equation}
holds, then $r_{m+1}$ and $k_{m+1}$ can be chosen so that these properties also hold with $n=m$.

By \eqref{prop-2}, \eqref{M(r)} and \eqref{m-big}, we can apply Theorem~\ref{Harnack} with $r=r_m$ and $k=k_m$ to deduce that
\begin{equation}\label{contains-RK}
f\left(A_m\right)=f(A(r_m,r_m^{k_m}))\supset A\left(R,R^{K}\right),
\end{equation}
where
\begin{equation}\label{outside1}
R=M(r_m)>r_{m}^{16},\;\;R^K=M(r_m^{k_m-2\delta_m})^{1-2\pi\delta_m}\;\;\text{and}\;\; K\ge k_{m}(1-9\delta_m),
\end{equation}
and
\begin{equation}\label{inside1}
f(A(r_m^{1+6\pi\delta_m},r_m^{k_m(1-6\pi\delta_m)}))\subset A\left(R^{1+6\pi\delta(R)},R^{K(1-6\pi\delta(R))}\right).
\end{equation}

So if we take
\[
r_{m+1}=R,\;\; \delta_{m+1}=\delta(R)\;\;\text{and}\;\; k_{m+1}=K,
\]
then \eqref{contains-RK}, \eqref{outside1} and \eqref{inside1} imply that the properties \eqref{prop-3}, \eqref{prop-4}, \eqref{prop-1} and \eqref{prop-1a} all hold with $n=m$.

To deduce \eqref{prop-2} in the case $n=m$ first note that, by \eqref{M(r)},
\[
\delta_0\le \frac{1}{80}\;\;\text{and}\;\;\frac{\delta_j}{\delta_{j+1}}=\frac{\sqrt{\log r_{j+1}}}{\sqrt{\log r_j}}=\sqrt{\frac{\log M(r_j)}{\log r_j}}\ge 4,\;\;\text{for }j=0,\ldots, m.
\]
Thus, by~\eqref{prop-2} with $n=0,\ldots, m-1$,~\eqref{outside1}, and the fact that $k_{m+1}=K$,
\[
k_{m+1}\ge k_0\prod_{j=0}^m(1-9\delta_j)\ge (1+20\delta_0)(1-12\delta_0)\ge 1+5\delta_0\ge 1+20\delta_{m+1},
\]
as required.

The first two inequalities in \eqref{prop-2a} with $n=m$ follow immediately from
\eqref{prop-1a} and \eqref{prop-2}. Also, the inequality
$(r_m^{k_m})^2<M(r_m)=r_{m+1}$ holds in the case $m=0$, by \eqref{M(r)}, and we can deduce this inequality in the case $m\ge 1$ from \eqref{prop-1a}, \eqref{prop-2a} with $n=m-1$, and
Lemma~\ref{Had} part~(b) (applied with $r=r_{m-1}^{k_{m-1}}$ and $k=2$) as follows:
\begin{eqnarray}\label{N-case}
(r_m^{k_m})^2&=&(M(r_{m-1}^{k_{m-1}-2\delta_{m-1}})^{1-2\pi\delta_{m-1}})^2\le (M(r_{m-1}^{k_{m-1}}))^2\\
&\le &M((r_{m-1}^{k_{m-1}})^2)< M(r_m)= r_{m+1}.\notag
\end{eqnarray}
Hence \eqref{prop-3}, \eqref{prop-4},~\eqref{prop-1}, \eqref{prop-1a},~\eqref{prop-2} and \eqref{prop-2a} all hold for $n=m$.

We now observe that the condition in \eqref{m-big} cannot hold for all $m\ge 0$. For if this were the case, then we could construct a sequence of annuli $(A_n)_0^{\infty}$ satisfying \eqref{prop-3}, \eqref{prop-4}, \eqref{prop-1}, \eqref{prop-1a} and \eqref{prop-2}, and property \eqref{prop-4} would then imply that~$f$ has {\mconn} Fatou components, contrary to our hypothesis.

Therefore for some $N\ge 0$ the condition in \eqref{m-big} holds for $m=0,\ldots ,N-1$, and the annuli $A(r_n,r_n^{k_n})$, $n=0,\ldots, N-1$, exist satisfying properties~(a) and~(c), including \eqref{prop-2b} by \eqref{N-case}, but \eqref{m-big} fails for $m=N$; that is,
\begin{equation}\label{rho}
\text{there exists } s\in (r_N^{1+\delta_N}, r_{N}^{k_N-\delta_N})
\text{ such that }\;m(s)\le 1.
\end{equation}
Then
\begin{equation}\label{s-int}
s\in (\tfrac23s,\tfrac43s)\subset(\tfrac13s,\tfrac83s)
\subset (\tfrac13 r_N^{1+\delta_N},\tfrac83 r_N^{k_N-\delta_N})\subset (r_N,r_N^{k_N}),
\end{equation}
since $r_N^{\delta_N}=\exp(\sqrt{\log r_N})\ge 3$. Therefore, since $m(s)\le 1$, we can apply Corollary~\ref{Bohr} with $r=\tfrac13 s$, to deduce that if $S,S',T,T'$ satisfy
\[
2<S<S',\;\;T<T'\le M(\tfrac13 r_{N}^{1+\delta_N})\;\;\text {and }\;S'\le \tfrac12 T,
\]
then
\[
f(A(\tfrac13s,\tfrac83s))\;\;\text{contains}\;\;  A(S,S')\;\;
\text{or}\;\; A(T,T'),
\]
so, by \eqref{s-int},
\[
f(A_{N})\;\;\text{contains}\;\;  A(S,S')\;\;
\text{or}\;\; A(T,T'),
\]
as required.
\end{proof}
We now use Lemma~\ref{annuli-N} to prove Theorem~\ref{annuli} in the case when $f$ has no {\mconn} Fatou components.
\begin{theorem}\label{annuli-noMC}
Let $f$ be a {\tef} with no {\mconn} Fatou components. There exists $R=R(f)>0$ and a sequence of closed annuli
\begin{equation}\label{Bn-form}
B_n=\overline A(r_n,r^{k_n}_n),\;\;n\ge 0 ,
\end{equation}
each of which meets $J(f)$, such that
\begin{equation}\label{prop1-bbb}
B_n\subset A_n(R),\;\;\text{for } n\ge 0,
\end{equation}
and
\begin{equation}\label{prop2-a}
f(B_n)\supset B_{n+1},\;\;\text{for } n\ge 0,
\end{equation}
and there is a subsequence $B_{n_j}$ such that, for $j\in\N$,
\begin{equation}\label{subseq2}
f(B_{n_j})\supset B_{n},\;\; \text{for }0\le n\le n_j, \text{ with at most one exception.}
\end{equation}
\end{theorem}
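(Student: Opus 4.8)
The idea is to iterate Lemma~\ref{annuli-N}, chaining together the finite blocks of annuli it produces. Starting from any $r_0\ge R_3$, Lemma~\ref{annuli-N} gives a finite sequence of annuli $A_0,\dots,A_N$ (with $N\ge 0$) satisfying \eqref{prop-3}, and, crucially, property~(b): $f(A_N)$ covers $A(S,S')$ or $A(T,T')$ for any admissible $S,S',T,T'$. I would use this covering flexibility to \emph{restart} the construction: pick a new value $r_{N+1}$, large enough to exceed $R_3$ so Lemma~\ref{annuli-N} applies again, chosen so that the annulus $B_{N+1}=\overline A(r_{N+1},r_{N+1}^{k_{N+1}})$ (with $k_{N+1}=1+20\delta(r_{N+1})$) is one of the two candidate annuli in part~(b); then $f(B_N)\supset B_{N+1}$. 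Repeating this indefinitely produces an infinite sequence of closed annuli $(B_n)_{n\ge0}$ together with a strictly increasing sequence of "breakpoint" indices at which a new block begins.

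The breakpoint indices will be the subsequence $(n_j)$. At a breakpoint $n=n_j$, I have $f(B_{n_j})\supset A(S,S')$ or $A(T,T')$, and I want this image to contain \emph{all} the earlier annuli $B_0,\dots,B_{n_j}$. The point is that all of $B_0,\dots,B_{n_j}$ lie inside a single large disc $B(0,\rho)$, and—by choosing $R$ and hence the scales appropriately—one can arrange that either the "inner" candidate annulus $A(S,S')$ or the "outer" candidate annulus $A(T,T')$ contains $\overline A(r_0,\rho)\supset B_0\cup\dots\cup B_{n_j}$; the admissibility constraints $2<S<S'$, $T<T'\le M(\tfrac13 r_{N}^{1+\delta_N})$, $S'\le\tfrac12 T$ in \eqref{R-conds-1} must be checked to be satisfiable with $S=r_0-\epsilon$ (or $S$ just below the innermost radius) and $S'=\rho+\epsilon$. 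This is where the "at most one exception" appears: the dichotomy in part~(b) means I can guarantee coverage of the earlier annuli only \emph{on one side}, and if the wrong alternative occurs the image might miss one of the $B_n$ (specifically the one straddling the excluded radius). For the non-breakpoint steps, \eqref{prop2-a} is just \eqref{prop-3} from the current block. I also need to set $R=R(f)$: using \eqref{prop-1} within each block and the restart rule, the radii $r_n$ satisfy $r_{n+1}=M(r_n)$ except at breakpoints where $r_{n_j+1}$ is chosen freshly, and one checks $B_n\subset A_n(R)$ by the now-standard telescoping argument as in the proof of Theorem~\ref{annuli-MC}, after relabelling so that $n_0=0$.

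Each $B_n$ meets $J(f)$: within a block, $B_n=A_n$ lies in a region where, by the analogue of \eqref{propn-4}, forward images stay in a nested family of annuli; if some $B_n$ were contained in $F(f)$ then—since $f$ has no multiply connected Fatou components—the component of $F(f)$ meeting $B_n$ would be simply connected and, being invariant under the forward dynamics into ever-larger annuli, would force a multiply connected component or contradict boundedness of such components. More directly, since the construction never terminates, the infinitely many restarts force infinitely many breakpoints, and at a breakpoint the map $f$ is not injective on $B_{n_j}$ (it covers a full round annulus $A(S,S')$ or $A(T,T')$ whose "size" dwarfs that of $B_{n_j}$), so $B_{n_j}$ contains a critical point or wraps around, putting it in $J(f)$; the intermediate $B_n$ meet $J(f)$ because $f^{n_j-n}(B_n)\supset B_{n_j}$ by \eqref{prop-3}, and a preimage of a set meeting $J(f)$ meets $J(f)$.

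\medskip

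\textbf{Main obstacle.} The delicate point is the covering step at breakpoints: arranging that one of the two admissible annuli in Lemma~\ref{annuli-N}(b) genuinely contains \emph{all} of $B_0,\dots,B_{n_j}$ rather than just the outer portion. This forces a careful bookkeeping of how fast the outer radius $\rho$ of $B_0\cup\dots\cup B_{n_j}$ grows compared with the upper constraint $M(\tfrac13 r_N^{1+\delta_N})$ on $T'$; one must verify, using \eqref{prop-1}--\eqref{prop-2a}, that $\rho<r_N<M(\tfrac13 r_N^{1+\delta_N})$ with enough room to also satisfy $S'\le\tfrac12 T$. Getting this inequality chain to close—and correctly pinning down the single possible exceptional $B_n$—is the crux, and is exactly why \eqref{subseq2} is stated with "at most one exception" whereas the multiply connected case in Theorem~\ref{annuli-MC} has none.
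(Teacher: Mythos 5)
Your high-level plan---iterating Lemma~\ref{annuli-N}, chaining the finite blocks together, and exploiting the covering flexibility of part~(b) at block ends to restart---matches the paper's, and your restart construction and the telescoping argument for \eqref{prop1-bbb} are on the right lines. However, the central step, obtaining \eqref{subseq2} at a breakpoint, is not correctly set up. You propose applying Lemma~\ref{annuli-N}(b) once, with $A(S,S')$ chosen so large that $\overline A(S,S')\supset B_0\cup\dots\cup B_{n_j}$ and $A(T,T')$ outside it. The dichotomy then says only that $f(B_{n_j})$ contains $A(S,S')$ \emph{or} $A(T,T')$; if the ``wrong'' alternative $A(T,T')$ occurs, the image may miss \emph{all} of $B_0,\dots,B_{n_j}$, not just one, since the two candidate annuli are necessarily disjoint (because $S'\le\tfrac12T$) and hence $A(T,T')$ cannot contain $\overline A(r_0,\rho)$. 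Your phrase ``the one straddling the excluded radius'' has the right intuition---it comes from Theorem~\ref{Bohr-thm}, where the image covers $B(0,M(r))$ minus a \emph{small} disc---but Lemma~\ref{annuli-N}(b) is the weaker two-annulus corollary, and a single application of it does not give ``at most one exception.''

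The paper instead applies Lemma~\ref{annuli-N}(b) \emph{to each pair} $0\le p<q\le n_j$, taking $A(S,S')=A(r_p,r_p^{k_p})$ and $A(T,T')=A(r_q,r_q^{k_q})$; the admissibility conditions in \eqref{R-conds-1} are then verified from the separation estimates \eqref{prop-2a} and \eqref{prop-2b}. Since at least one of every pair $B_p,B_q$ is covered, at most one index can fail, which is precisely \eqref{subseq2}. Alternatively, one can invoke Theorem~\ref{Bohr-thm} directly: the exceptional disc $B(w_1,\eps(r)\max\{|w_1|,1\})$ lies in a thin round annulus about $|w_1|$ and so, given the wide modular separation of the $B_n$, can meet at most one of them. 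One or the other of these two arguments is the ingredient your plan is missing. Separately, your discussion of why each $B_n$ meets $J(f)$ is vague; the paper's reason is Kisaka's theorem (all components of $J(f)$ are unbounded when there are no multiply connected Fatou components), combined with the covering \eqref{prop2-a}.
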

\begin{proof}
Let $R_4=R_4(f)>0$ be so large that
\begin{equation}\label{M(r)-a}
M(r)> r^{10000}\;\;\text{and}\;\;\sqrt{\log r}\ge 80,\;\;\text{for }r \ge R_4,
\end{equation}
and also so that $R_4\ge \max\{R_1,R_2,R_3\}$, where $R_1$ is the constant in
Lemma~\ref{Had}, $R_2$ is the constant in Corollary~\ref{Bohr}, and $R_3$ is the
constant in Lemma~\ref{annuli-N}.

In the proof we construct annuli $B_n$ of the form given in \eqref{Bn-form} which satisfy the conclusions of the theorem and also satisfy
\begin{equation}\label{prop1-a}
M(r^{k_n}_{n})>r^{k_{n+1}}_{n+1}>r_{n+1}\ge M(r_n) > (r^{k_n}_n)^2,\;\;\text{for } n\ge 0,
\end{equation}
a sequence of estimates that is closely related to \eqref{prop-2a}.

Take $r_0\ge R_4$. Then, by Lemma~\ref{annuli-N}, there exists $N(1)\ge 0$ and annuli
\[
A_n=A(r_n,r_n^{k_n}),\;\;n=0,\ldots, N(1),
\]
with the properties given in that lemma.

Putting
\begin{equation}\label{Bn-def}
B_n=\overline A_n,\;\;\text{for } n=0,\ldots,N(1),
\end{equation}
we deduce, by \eqref{prop-3}, that
\[
f(B_n)\supset B_{n+1},\;\;\text{for } n=0,\ldots, N(1)-1,
\]
and, by \eqref{prop-2a}, that
\begin{equation}\label{M-rn}
M(r^{k_n}_{n})>r^{k_{n+1}}_{n+1}>r_{n+1}=M(r_n)>(r^{k_n}_n)^2,\;\;\text{for } n=0,\ldots, N(1)-1.
\end{equation}
Also, by Lemma~\ref{annuli-N}~part~(b), for any $S,S',T,T'$ that satisfy
\begin{equation}\label{R-conds-2}
2<S<S',\;\;T<T'\le M(\tfrac13r_{N(1)}^{1+\delta_{N(1)}})\;\;\text {and }\;S'\le\tfrac12 T,
\end{equation}
we have
\begin{equation}\label{BN-cover}
f(B_{N(1)})\;\;\text{contains}\;\;  \overline A(S,S')\;\;
\text{or}\;\;\overline A(T,T').
\end{equation}

We apply the covering property \eqref{BN-cover} in two situations. First we take~$S$, $S'$, $T$,~$T'$ as follows:
\begin{equation}\label{Rdef}
S=M(r_{N(1)})\;\;\text{and}\;\;S'=S^{1+20\delta(S)},
\end{equation}
and
\begin{equation}\label{tildeRdef}
T=S^{1+40\delta(S)}\;\;\text{and}\;\;T'=T^{1+20\delta(T)}.
\end{equation}
This choice is possible since, by~\eqref{M(r)-a},
\[
\frac{T}{S'}=S^{20\delta(S)}=\exp\left(20\sqrt{\log S}\right)\ge 2,
\]
and, by Lemma~\ref{Had} part~(b), \eqref{Rdef}, \eqref{tildeRdef} and the fact that
$\sqrt{\log r_{N(1)}}\ge 5\log 3$,
\begin{eqnarray}\label{array-T'}
M(\tfrac13r_{N(1)}^{1+\delta_{N(1)}})&=&M(r_{N(1)}^{1+\delta_{N(1)}-\log 3/\log r_{N(1)}})\ge M(r_{N(1)}^{1+\tfrac45\delta_{N(1)}})\\
&\ge& M(r_{N(1)})^{1+\tfrac45\delta_{N(1)}}=S^{1+\tfrac45\delta_{N(1)}}\notag\\
&\ge& S^{(1+40\delta(S))(1+20\delta(T))} = T',\notag
\end{eqnarray}
since~\eqref{M(r)-a} implies that $\delta(S)\le \delta_{N(1)}/100\le 1/80$.

Thus \eqref{BN-cover} holds for these choices of $S,S',T,T'$.
Hence we can define the pair $(r_{N(1)+1},\delta_{N(1)+1})$ to be either $(S,\delta(S))$ or $(T,\delta(T))$, and then define
\[
k_{N(1)+1}=1+20\delta_{N(1)+1},
\]
and
\[
B_{N(1)+1}=\overline A(r_{N(1)+1},r_{N(1)+1}^{k_{N(1)+1}}),
\]
to ensure that~\eqref{prop2-a} holds for $n=N(1)$. Then~\eqref{prop1-a} also holds for this value of $n$; that is,
\[
M(r_{N(1)}^{k_{N(1)}})>r_{N(1)+1}^{k_{N(1)+1}}>r_{N(1)+1}\ge M(r_{N(1)}) > (r^{k_{N(1)}}_{N(1)})^2.
\]
Indeed, by the definition of $r_{N(1)+1}$, the definition of $S$ and \eqref{prop-2b},
\begin{equation}\label{k-N(1)}
r_{N(1)+1}\ge S=M(r_{N(1)})>(r_{N(1)}^{k_{N(1)}})^2,
\end{equation}
and, by Lemma~\ref{Had} part~(b) and the definition of $k_{N(1)+1}$,
\begin{eqnarray*}
M(r_{N(1)}^{k_{N(1)}})&\ge& M(r_{N(1)})^{k_{N(1)}}\\
&=&S^{k_{N(1)}}\ge S^{1+20\delta_{N(1)}}\\
&\ge& S^{(1+40\delta(S))(1+20\delta(S))}\ge T^{1+20\delta_{N(1)+1}}\\
&=& T^{k_{N(1)+1}}\ge r^{k_{N(1)+1}}_{N(1)+1},
\end{eqnarray*}
since $\delta_{N(1)+1}=\delta(r_{N(1)+1})\le \delta(S)\le\delta_{N(1)}/100\le 1/80$.

We also apply the covering property \eqref{BN-cover} when
\[
A(S,S')=A(r_p,r_p^{k_p})\;\;\text{and}\;\;A(T,T')=A(r_q,r_q^{k_q}),
\]
where $0\le p<q\le N(1)$, which is possible since
\[
r_n^{k_n}<r_{n+1}^{1/2},\;\;\text{so}\;\;r_{n+1}\ge 2r_n^{k_n},\;\;\text{for }n=0,\ldots, N(1)-1,
\]
by \eqref{M-rn}, and
\[
r_{N(1)}^{k_{N(1)}}\le M(\tfrac13r_{N(1)}^{1+\delta_N(1)}),
\]
by~\eqref{k-N(1)}, for example. It follows that
\[
f(B_{N(1)})\supset B_n,\;\;\text{for } n=0,\ldots, N(1),\;\;\text{with at most one exception}.
\]

Now, since
\[
B_{N(1)+1}=\overline A(r_{N(1)+1},r_{N(1)+1}^{k_{N(1)+1}}),\;\;\text{where }k_{N(1)+1}=1+20\delta_{N(1)+1},
\]
we can apply Lemma~\ref{annuli-N} with $r_0$ replaced by $r_{N(1)+1}$ to give $N(2)\in \N$ with $N(2)>N(1)$ and closed annuli $B_n$, $n=N(1)+1,\ldots, N(2)$, which satisfy \eqref{prop1-a} and \eqref{prop2-a} for $n=N(1)+1,\ldots, N(2)$ and also
\[
f(B_{N(2)})\supset B_n,\;\;\text{for } n=0,\ldots, N(2),\;\;\text{with at most one exception}.
\]

Repeating this application of Lemma~\ref{annuli-N} infinitely often, and introducing the subsequence $n_j=N(j)$, $j\in \N$, we obtain a sequence $(B_n)$ that satisfies \eqref{prop2-a}, \eqref{subseq2} and \eqref{prop1-a}. Also, since $f$ has no {\mconn} Fatou components, all the components of $J(f)$ are unbounded (see, for example, \cite[Theorem~1]{mK98}) and so the closed annuli $B_n$ must all meet $J(f)$ by \eqref{prop2-a}.

Finally, \eqref{prop1-a} implies that there exists $R=R(f)>0$ such that \eqref{prop1-bbb} holds. Indeed, by \eqref{prop1-a} we can choose $R>0$ such that
\[
r_0^{k_0}<R<r_1<r^{k_1}_1<M(R),
\]
so $B_0\subset A_0(R)$ and $B_1\subset A_1(R)$, and, by \eqref{prop1-a} again,
\[
M^{n-1}(R)<r_n<r^{k_n}_n<M^n(R),\;\;\text{so}\;\;B_n\subset A_n(R),\;\;\text{for }n\in\N,
\]
which proves \eqref{prop1-bbb}.
\end{proof}

\section{Proof of Theorem~\ref{itin}}\label{itin-proof}
\setcounter{equation}{0}
We deduce Theorem~\ref{itin} from Theorem~\ref{annuli} by using the following simple topological lemma, proved in \cite[Lemma~1]{RS08c}.
\begin{lemma}\label{top}
Let $E_n$, $n\in\N$, be a sequence of compact sets in $\C$ and
$f:\C\to\C$ be a continuous function such that
\begin{equation}\label{contains}
f(E_n)\supset E_{n+1},\quad\text{for } n\ge 0.
\end{equation}
Then there exists $\zeta$ such that $f^n(\zeta)\in E_n$, for $n\ge 0$.

If $f$ is also meromorphic and $E_n\cap J(f)\ne
\emptyset$, for $n\ge 0$, then there exists $\zeta\in J(f)$ such that
$f^n(\zeta)\in E_n$, for $n\ge 0$.
\end{lemma}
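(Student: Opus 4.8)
The plan is a standard compactness argument: for each $m$ one refines $E_0$ to the compact set of points whose first $m$ iterates follow the prescribed sets, checks that this set is nonempty using \eqref{contains}, and then passes to the intersection over all $m$.

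Concretely, fix $m \ge 0$ and define compact sets $E_j^{(m)} \subset E_j$ for $0 \le j \le m$ by downward induction: put $E_m^{(m)} = E_m$ and $E_j^{(m)} = E_j \cap f^{-1}\bigl(E_{j+1}^{(m)}\bigr)$ for $0 \le j < m$. Since $f$ is continuous and each $E_{j+1}^{(m)}$ is compact, hence closed, each $E_j^{(m)}$ is a closed subset of the compact set $E_j$ and so is compact; and an immediate induction gives
\[
E_j^{(m)} = \{\, z \in E_j : f^i(z) \in E_{j+i} \text{ for } 0 \le i \le m - j \,\}.
\]
The point is that \eqref{contains} is inherited by these refinements: if $w \in E_{j+1}^{(m)} \subset E_{j+1} \subset f(E_j)$, pick $z \in E_j$ with $f(z) = w$; then $f(z) \in E_{j+1}^{(m)}$ forces $z \in E_j^{(m)}$, so $f\bigl(E_j^{(m)}\bigr) \supset E_{j+1}^{(m)}$. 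Starting from $E_m^{(m)} = E_m$ (which we may assume nonempty, else the lemma is vacuous) and working upwards, it follows that every $E_j^{(m)}$, in particular $E_0^{(m)}$, is nonempty.

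Increasing $m$ only adds constraints, so $\bigl(E_0^{(m)}\bigr)_{m \ge 0}$ is a nested sequence of nonempty compact sets; by Cantor's intersection theorem there is a point $\zeta \in \bigcap_{m \ge 0} E_0^{(m)}$, and by the displayed description this $\zeta$ satisfies $f^n(\zeta) \in E_n$ for all $n \ge 0$. For the second assertion, I would run the same argument with each $E_n$ replaced by the compact set $E_n \cap J(f)$: this set is nonempty by hypothesis, and \eqref{contains} still holds for it because $J(f)$ is backward invariant, so any preimage $z \in E_n$ of a point $w \in E_{n+1} \cap J(f)$ automatically lies in $J(f)$. The resulting $\zeta$ then lies in $E_0 \cap J(f) \subset J(f)$ with $f^n(\zeta) \in E_n$ for all $n$. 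The only place any care is needed is the observation that the covering property survives passing to the subsets $E_j^{(m)}$ and to $J(f)$ — and this is exactly where the hypothesis does its work; everything else is bookkeeping and the finite intersection property.
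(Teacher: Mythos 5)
Your argument is correct, and it is the standard nested-compact-sets proof of this kind of covering lemma; the paper itself does not reprove the statement but cites \cite[Lemma~1]{RS08c}, where essentially the same compactness argument appears. Both parts of your reasoning — that the refinements $E_j^{(m)}$ inherit the covering property \eqref{contains}, and that replacing $E_n$ by $E_n\cap J(f)$ preserves both compactness and the covering property because $J(f)$ is closed and completely invariant — are exactly the two points that need checking, and you have checked them. One tiny wording quibble: if some $E_m$ were empty the lemma's conclusion would simply be false rather than ``vacuous'', so the nonemptiness of each $E_n$ is an implicit hypothesis rather than something one may assume without loss; in the applications in this paper the $E_n$ are always nonempty closed annuli, so this causes no trouble.
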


\begin{proof}[Proof of Theorem~\ref{itin}]
By Theorem~\ref{annuli}, there exists $R=R(f)>0$ and a sequence of closed annuli
\[
B_n=\overline A(r_n,r'_n),\;\;n\ge 0,
\]
each of which meets $J(f)$, such that
\begin{equation}\label{prop1-c}
B_n\subset A_n(R),\;\;\text{for } n\ge 0,
\end{equation}
\begin{equation}\label{prop3-c}
f(B_n)\supset B_{n+1},\;\;\text{for } n\ge 0,
\end{equation}
and there is a subsequence $B_{n_j}$ such that, for $j\in \N$,
\begin{equation}\label{prop4-c}
f(B_{n_j})\supset B_{n},\;\;\text{for } 0\le n\le n_j,\;\;\text{with at most one exception}.
\end{equation}

To prove Theorem~\ref{itin} we take this value of $R$, this sequence $(n_j)$ of non-negative integers and, for $j\in\N$, let $I_j$ be either the empty set or the singleton set consisting of the possible exceptional integer in $\{0,\ldots,n_j\}$ that occurs in~\eqref{prop4-c}.

If $s_0s_1\ldots$ is any sequence of non-negative integers satisfying the hypotheses of Theorem~\ref{itin}, then it follows from \eqref{prop3-c} and \eqref{prop4-c} that the compact sets
\[
E_n=B_{s_n}\subset A_{s_n}(R), \;\;\text{for }n\ge 0,
\]
satisfy \eqref{contains}. Then, by Lemma~\ref{top}, there exists a point $\zeta\in J(f)$ with itinerary $s_0s_1\ldots$. This proves the first part of Theorem~\ref{itin} and the last part follows immediately from the last part of Theorem~\ref{annuli}.
\end{proof}

We now deduce Corollary~\ref{itin-types}.
\begin{proof}[Proof of Corollary~\ref{itin-types}]
Parts~(a),~(b) and~(c) follow easily from Theorem~\ref{itin}, by using appropriate sequences $s_0s_1\ldots$ chosen to satisfy property \eqref{itin-prop} and $s_n\ge s$ for $n\ge 0$. The uncountable number of itineraries in parts~(b) and~(c) follows from the fact that if $j$ is such that $s_{n_j}\ge s+2$, then there are at least two possible choices for $s_{n_j+1}$ from the set $\{s,s+1,s+2\}$.

Part~(d) is proved as follows. Let $(a_n)$ be any positive sequence such that $a_n\to\infty$ as $n\to \infty$. Without loss of generality we can assume that $(a_n)$ is increasing. Then we use the fact that
\begin{equation}\label{B-2}
f^2(B_{n_j})\supset B_{n_j},\;\;\text{for }j\in\N,
\end{equation}
which follows from \eqref{prop3-c} and \eqref{prop4-c}, to choose a sequence $s_0s_1\ldots$ that has property \eqref{itin-prop} and is such that, for $j\in\N$,
\begin{equation}\label{Nj}
N_j=\max\{n:s_n=n_j\}\;\;\text{satisfies}\;\; a_{N_j} \ge M^{n_{j+1}}(R).
\end{equation}
The point $\zeta$ with itinerary $s_0s_1\ldots$ has an orbit that visits each annulus $B_{n_j}$ so often
that
\[
|f^n(\zeta)|\le M^{s_n}(R)\le a_n, \;\;\text{for }n\ge N_1.
\]
Once again we can obtain uncountably many such itineraries by using the fact, which follows from \eqref{B-2}, that for each $j\in \N$ there are infinitely many ways of choosing the integer $N_j$ to satisfy \eqref{Nj}.
\end{proof}

\section{Proof of Theorem~\ref{prescribed}}\label{prescribed-proof}
\setcounter{equation}{0}
We prove two versions of Theorem~\ref{prescribed} in the first of which it is assumed that~$f$ has a {\mconn} Fatou component.
\begin{theorem}\label{prescribed-MC}
Let $f$ be a {\tef} with a {\mconn} Fatou component. Then there exists $R_0=R_0(f)>0$ with the property that whenever $(a_n)$ is a positive sequence such that
\begin{equation}\label{aprops-MC}
a_n \ge R_0\;\;\text{and}\;\;a_{n+1}\le M(a_n),\;\;\text{for }n\ge 0,
\end{equation}
there is a point $\zeta\in J(f)$ and a sequence $(n_j)$ with $n_j\to\infty$ as $j\to \infty$ such that
\begin{equation}\label{presc-MC}
|f^n(\zeta)| \geq a_n, \mbox{ for } n\ge 0, \;\mbox{ and } |f^{n_j}(\zeta)| \le M^2(a_{n_j}), \mbox{ for } j \in \N.
\end{equation}

If, in addition,
\begin{equation}\label{notfast-MC}
\text{for each }\ell\in\N \;\text{there exists }n(\ell)\in\N \;\text{such that } a_{n(\ell)+\ell}<M^{n(\ell)}(R_0),
\end{equation}
then there are uncountably many itineraries with respect to $\{A_n(R_0)\}$ that correspond to points $\zeta$ satisfying~\eqref{presc-MC}.
\end{theorem}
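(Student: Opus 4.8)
The plan is to deduce Theorem~\ref{prescribed-MC} from the structure given by Theorem~\ref{annuli-MC}, which provides $R=R(f)>0$, a nested sequence of \mconn{} Fatou components $U_n$ with $f(U_n)=U_{n+1}$, and closed annuli $B_n=\overline A(t_n,t'_n)$ satisfying \eqref{prop1-b}, \eqref{prop1-bb}, \eqref{prop3-b} and \eqref{subseq1}, where $(n_j)$ is exactly the set of $n$ with a zero of $f$ in $A_n(R)$. First I would fix $R_0$: since the Remark after Theorem~\ref{annuli} lets us take $R$ as large as we wish, choose $R_0=R$ large enough that Lemma~\ref{Had} gives all the convexity estimates we need, and such that $t'_0<R_0$, so the partition $\{A_n(R_0)\}$ is the one associated with this sequence of annuli. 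Given a positive sequence $(a_n)$ with \eqref{aprops-MC}, the orbit we build must satisfy $|f^n(\zeta)|\ge a_n$ for all $n$, so the natural idea is: at each step $n$, sit in whichever annulus $B_m$ has $t_m\ge a_n$ but is not ``too much larger''; because $a_{n+1}\le M(a_n)$, the index $m=m(n)$ can be chosen to increase by at most~$1$ at each step except at the ``exceptional'' levels $n_j$, where the subsequence property \eqref{subseq1} lets us drop back down.

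The key step is to define, for the given $(a_n)$, an index sequence $(m_n)_{n\ge 0}$ of non-negative integers with the properties: (i) $t_{m_n}\ge a_n$ for all $n$, so that $|f^n(\zeta)|\ge t_{m_n}\ge a_n$; (ii) $m_{n+1}\in\{m_n,m_n+1\}$ except possibly when $m_n$ is one of the $n_j$, in which case $m_{n+1}$ may be chosen smaller (using $f(B_{n_j})\supset B_\ell$ for all $\ell\le n_j$); and (iii) infinitely often $m_n$ is ``sharp'', i.e.\ $t_{m_n}\le M^2(a_n)$, say when $m_n=m_{n-1}+1$ was forced by the growth of $(a_n)$. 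Property~(i) combined with \eqref{prop1-b} (which ties $[t_{n+1},t'_{n+1}]$ to $[M(t_n),M(t'_n)]$, so $t'_{m}$ is only a bounded ``$M$-factor'' bigger than $t_m$) will yield the upper bound $|f^{n_j'}(\zeta)|\le t'_{m_{n_j'}}\le M^2(a_{n_j'})$ along the sharp subsequence, giving the sequence $(n_j)$ in the statement. To get the point $\zeta$ itself I would invoke Lemma~\ref{top}: the compact sets $E_n=B_{m_n}$ satisfy $f(E_n)\supset E_{n+1}$ precisely by \eqref{prop3-b} when $m_{n+1}=m_n+1$ and by \eqref{subseq1} when we drop down at an exceptional level, and each $B_{m_n}$ meets $J(f)$, so there is $\zeta\in J(f)$ with $f^n(\zeta)\in E_n$ for all $n$.

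For the final clause, suppose \eqref{notfast-MC} holds. The point is that this condition guarantees that the index sequence $m_n$ cannot eventually satisfy $m_{n+1}=m_n+1$ for all large $n$ — equivalently, the orbit is forced back down infinitely often — so the ``drop-down'' moves at the exceptional levels $n_j$ are actually used infinitely often. At each such drop-down there is genuine freedom: because $f(B_{n_j})\supset B_\ell$ for \emph{every} $\ell$ with $0\le \ell\le n_j$ (no exceptions in the \mconn{} case), and because the annuli lie in distinct Fatou components $U_\ell$ with pairwise disjoint orbits, two different choices of $\ell$ at a drop-down lead to points with different annular itineraries with respect to $\{A_n(R_0)\}$. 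Making such a binary choice at infinitely many drop-downs — while still respecting (i)--(iii), which is possible because \eqref{notfast-MC} leaves room between $a_n$ and the fast-escape rate $M^n(R_0)$ — yields a Cantor set of admissible index sequences, hence uncountably many points $\zeta$ with distinct itineraries, all satisfying \eqref{presc-MC}.

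The main obstacle I expect is the bookkeeping in step~(iii): one must verify that the drop-down moves at the levels $n_j$ can always be arranged so that the resulting $m_n$ still satisfies $t_{m_n}\ge a_n$ \emph{and} returns to a sharp value $t_{m_n}\le M^2(a_n)$ infinitely often, simultaneously. This requires carefully comparing the growth of $(a_n)$, which is only controlled above by $a_{n+1}\le M(a_n)$, with the growth of the $t_n$, which by \eqref{prop1-b} is comparable to iterating $M$; the constant $2$ in $M^2(a_{n_j})$ is exactly the slack one gets from \eqref{prop1-b} plus one extra application of $M$, and checking it is tight will be where the real work lies. The existence of uncountably many itineraries under \eqref{notfast-MC} is then a comparatively soft consequence of the exact covering \eqref{subseq1} together with the disjointness of the Fatou components $U_n$.
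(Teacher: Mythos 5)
Your proposal follows the paper's proof in all essentials: fix $R_0=R$ from Theorem~\ref{annuli-MC}, track an index sequence $(m_n)$ that increments by one except at the zero-levels $N_j$ where one drops back to the \emph{least} $p$ with $t_p\ge a_{n+1}$ (which is what yields the $M^2(a_{n_j})$ bound, via $a_{n+1}>t_{p-1}$ together with \eqref{prop1-b} and \eqref{prop1-bb}), apply Lemma~\ref{top} to $E_n=B_{m_n}$, and obtain uncountably many itineraries by a binary ``drop or not'' choice at infinitely many such levels, which \eqref{notfast-MC} forces to be available. One small slip in your item~(ii): outside the exceptional levels, \eqref{prop3-b} forces $m_{n+1}=m_n+1$, not merely $m_{n+1}\in\{m_n,m_n+1\}$, since $f(B_m)\supset B_m$ is only known at the $N_j$; this imprecision does not affect the overall argument, which is the same as the paper's.
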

{\it Remarks}\;\;1.\;It is natural to ask if the expression $M^2(a_{n_j})$ in~\eqref{presc-MC} can be replaced by $M(a_{n_j})$.

2.\; Note that the hypothesis \eqref{notfast-MC} is independent of $R_0$ for sufficiently large~$R_0$.
\begin{proof}[Proof of Theorem~\ref{prescribed-MC}]
Let $R=R(f)>0$ and $B_n=\overline A(t_n,t'_n)$, $n\ge 0$, be the constant and the sequence of closed annuli that were obtained in Theorem~\ref{annuli-MC}. These annuli all meet $J(f)$ and have the following properties:
\begin{equation}\label{prop1-e}
[t_{n+1},t'_{n+1}]\subset [M(t_n),M(t'_n)],\;\;\text{for } n\ge 0,
\end{equation}
\begin{equation}\label{prop1-d}
B_n\subset A_n(R),\;\;\text{for } n\ge 0,
\end{equation}
\begin{equation}\label{prop3-d}
f(B_n)\supset B_{n+1},\;\;\text{for } n\ge 0,
\end{equation}
and $(B_n)$ has a subsequence, here called $(B_{N_j})$, such that
\begin{equation}\label{prop3-e}
f(B_{N_j})\supset  B_{n},\;\;\text{for }j\in\N,\;0\le n\le N_j.
\end{equation}

Let $R_0=R$ and suppose that $(a_n)$ is any sequence that satisfies
\eqref{aprops-MC}. We shall apply Lemma~\ref{top} to a sequence of closed annuli
$(E_n)$, all of which are chosen from the sequence $(B_n)$ and satisfy
\[
E_n\subset \{z:|z|\ge a_n\},\;\;\text{for }n\ge 0.
\]

First define
\[
E_0=B_p,\;\;\text{where } p \text{ is the least integer such that } t_p\ge a_0.
\]
Then, for each $n\ge 0$, we choose $E_{n+1}$ depending on $E_n$ by applying the following rule.

Suppose that $E_n=B_m$ and $t_m\ge a_n$:

(1)\;\;if $m=N_j$ for some $j\in\N$, then
\begin{equation}\label{m-nj}
E_{n+1}=B_{p},\;\;\text{where }p \text{ is the least integer such that } t_p\ge
a_{n+1};
\end{equation}

(2)\;\;otherwise $E_{n+1}=B_{m+1}$.

Since \eqref{prop1-e} and \eqref{aprops-MC} guarantee that
\[
t_{m+1}\ge M(t_m)\ge M(a_n)\ge a_{n+1},
\]
we have $p\le m+1$ in case~(1) and, in either case,
\[
E_{n+1}\subset \{z:|z|\ge a_{n+1}\}.
\]
Also, in case~(1) the definition of~$p$ in \eqref{m-nj} gives $a_{n+1}>t_{p-1}$ (note that $p\ge 1$ because $a_{n+1}\ge R>t_0$ by \eqref{prop1-d}). Hence, by \eqref{prop1-d} and \eqref{prop1-e} again,
\[
M^2(a_{n+1})>M^2(t_{p-1})>M(t'_{p-1})>t'_p\,,
\]
so, in this case,
\[
E_{n+1}=B_p\subset \{z:|z|\le M^2(a_{n+1})\}.
\]

With this choice of the annuli $E_n$ it follows from \eqref{prop3-d} and
\eqref{prop3-e} that
\[
f(E_n)\supset E_{n+1},\;\;\text{for }n\ge 0,
\]
that
\[
E_n\cap J(f) \ne \emptyset\;\;\text{and}\;\;E_n\subset \{z:|z|\ge a_n\},\;\;\text{for }n\ge 0,
\]
and that $(E_n)$ has a subsequence, $(E_{n_j})$ say, satisfying
\[
E_{n_j}\subset \{z:|z|\le M^2(a_{n_j})\}.
\]

Therefore any $\zeta$ given by Lemma~\ref{top}, with the property that
\[
f^n(\zeta)\in E_n\cap J(f),\;\;\text{for }n\ge 0,
\]
satisfies~\eqref{presc-MC} with this subsequence $n_j$.

Finally suppose that \eqref{notfast-MC} holds. Then, for all $\ell\in\N$, we deduce, by \eqref{aprops-MC}, that
\begin{equation}\label{notfast-MC-1}
a_{n+\ell}<M^n(R_0),\;\;\text{for }n\ge n(\ell).
\end{equation}
We can obtain other points $\zeta$ that satisfy \eqref{presc-MC} with different subsequences $(n_j)$ and different itineraries by using the same construction as that given above but choosing $E_{n+1}$ in case~(1) as follows: whenever $E_n=B_m$, $m=N_j$ and the value of~$p$ specified by \eqref{m-nj} satisfies $p\le m$ we choose either the value of~$p$ that is specified by \eqref{m-nj} or $p=m+1$. The conditions \eqref{presc-MC} and \eqref{notfast-MC-1} imply that the value of~$p$ specified by \eqref{m-nj} satisfies $p\le m$ infinitely often (for otherwise $E_n=B_{n+q}$ for some $q\in\Z$ and all sufficiently large~$n$). If we vary the construction in this way and consider all possible ways of choosing $E_{n+1}$ in case~(1) that correspond to a point~$\zeta$ satisfying \eqref{presc-MC}, then the itineraries with respect to $\{A_n(R_0)\}$ that arise form the branches of an infinite binary tree and so are uncountable. This completes the proof of Theorem~\ref{prescribed-MC}.
\end{proof}
In our second version of Theorem~\ref{prescribed} it is assumed that~$f$ has no
{\mconn} Fatou components. Here a very similar technique to that used in the proof of Theorem~\ref{prescribed-MC} could be used to deduce the result from the annuli $B_n$
obtained in Theorem~\ref{annuli-noMC}, but using this approach the estimate for
$f^{n_j}(\zeta)$ would be only $|f^{n_j}(\zeta)|\le M^3(a_{n_j})$ because of the
possible exceptional annulus not covered by $f(B_{n_j})$ in
Theorem~\ref{annuli-noMC}. However, we can obtain a much stronger result by using
Lemma~\ref{annuli-N} directly.

In this proof we once again use the notation $\delta(r)=1/\sqrt{\log r}$.
\begin{theorem}\label{prescribed-noMC}
Let $f$ be a {\tef} with no multiply connected Fatou components and let $\eps\in (0,1)$. Then there exists $R_0=R_0(f,\eps)>0$ with the property that whenever $(a_n)$ is a positive sequence such that
\begin{equation}\label{aprops-noMC}
a_n \ge R_0\;\;\text{and}\;\;a_{n+1}\le M(a_n),\;\;\text{for }n\ge 0,
\end{equation}
there exists a point $\zeta\in J(f)$ and a sequence $(n_j)$ with $n_j\to\infty$ as $j\to \infty$ such that
\begin{equation}\label{presc-noMC}
|f^n(\zeta)| \geq a_n, \mbox{ for } n \ge 0, \;\mbox{ and } |f^{n_j}(\zeta)| \le a_{n_j}^{1+\eps}, \mbox{ for } j \in \N.
\end{equation}

If, in addition,
\begin{equation}\label{notfast-noMC}
\text{for each }\ell\in\N \;\text{there exists }n(\ell)\in\N \;\text{such that } a_{n(\ell)+\ell}<M^{n(\ell)}(R_0),
\end{equation}
then there are uncountably many itineraries with respect to $\{A_n(R_0)\}$ that correspond to points $\zeta$ satisfying~\eqref{presc-noMC}.
\end{theorem}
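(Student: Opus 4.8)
The plan is to mimic the argument of Theorem~\ref{prescribed-MC}, but to apply Lemma~\ref{annuli-N} directly at each stage rather than relying on the annuli produced in Theorem~\ref{annuli-noMC}. The point is that the exceptional, uncovered annulus in Theorem~\ref{annuli-noMC} is precisely what forced the weaker estimate $M^3(a_{n_j})$ there, whereas the covering conclusion of Lemma~\ref{annuli-N}(b) gives us the \emph{freedom to choose} which of two target annuli $\overline A(S,S')$ or $\overline A(T,T')$ we hit; exploiting this freedom is what will give the sharp estimate $a_{n_j}^{1+\eps}$. First I would fix $\eps\in(0,1)$ and choose $R_0=R_0(f,\eps)$ large enough that: $R_0\ge\max\{R_1,R_2,R_3\}$; the inequalities \eqref{M(r)} hold for $r\ge R_0$; and, crucially, $20\delta(r)<\eps$ and $6\pi\delta(r)<\tfrac12$ for $r\ge R_0$, so that any annulus of the form $\overline A(r,r^{1+20\delta(r)})$ with $r\ge R_0$ is contained in $\{z:|z|\le r^{1+\eps}\}$. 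Given $(a_n)$ satisfying \eqref{aprops-noMC}, I set $r_0$ to be a large power-tower value of $M$ applied to $R_0$ so that $r_0\ge a_0$, and then run Lemma~\ref{annuli-N} with this $r_0$ and $k_0=1+20\delta(r_0)$ to obtain a first block of annuli $A_0,\dots,A_{N(1)}$; put $B_n=\overline A_n$ for $n=0,\dots,N(1)$.

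The main loop works as follows. Having reached $B_{N(j)}$, which by Lemma~\ref{annuli-N}(b) covers one of any admissible pair $\overline A(S,S')$, $\overline A(T,T')$, I want to choose the next ``restarting'' annulus $B_{N(j)+1}=\overline A(r,r^{1+20\delta(r)})$ with $r\ge a_{N(j)+1}$ and simultaneously as small as possible. I would take $S=M(r_{N(j)})$ and $S'=S^{1+20\delta(S)}$ as the ``small'' option, and a genuinely larger $T=S^{1+C\delta(S)}$, $T'=T^{1+20\delta(T)}$ (with $C$ an absolute constant like $40$) as the ``large'' option, checking that both satisfy the hypothesis \eqref{R-conds-1}, i.e. $T'\le M(\tfrac13 r_{N(j)}^{1+\delta_{N(j)}})$; this is exactly the computation carried out in \eqref{array-T'}. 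Since \eqref{aprops-noMC} gives $M(r_{N(j)})\ge M(a_{N(j)})\ge a_{N(j)+1}$, the ``small'' option already satisfies $r\ge a_{N(j)+1}$, so whichever of the two options is actually covered by $f(B_{N(j)})$, I can take it and still have $r\ge a_{N(j)+1}$. I then continue with a fresh application of Lemma~\ref{annuli-N} starting from $r_{N(j)+1}=r$, producing $B_{N(j)+1},\dots,B_{N(j+1)}$, and within this block I need the monotonicity estimate \eqref{prop-2a} to keep all $B_n$ contained in $\{z:|z|\ge a_n\}$: the point is that \eqref{prop-1} and \eqref{prop-1a} force $r_{n+1}=M(r_n)\ge M(a_n)\ge a_{n+1}$ by induction, exactly as in case~(2) of the proof of Theorem~\ref{prescribed-MC}. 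Finally, the subsequence $n_j$ is taken to be (a reindexing of) the indices $N(j)+1$; along it, $B_{n_j}=\overline A(r_{n_j},r_{n_j}^{1+20\delta(r_{n_j})})$ with $r_{n_j}\ge a_{n_j}$, and since $r_{n_j}$ is one of $S$ or $T$ we also have $r_{n_j}\le T=S^{1+C\delta(S)}=M(r_{N(j)})^{1+C\delta(S)}$. I would then need to verify that this, combined with $r_{n_j}^{1+20\delta(r_{n_j})}$ as the outer radius, is at most $a_{n_j}^{1+\eps}$; here one uses that $a_{n_j}\le M^2(a_{n_j-1})$-type bounds give only the lower bound $r_{n_j}\ge a_{n_j}$, so to get the \emph{upper} bound one must instead track that $r_{n_j}\le M^2(a_{n_j})$ exactly as in Theorem~\ref{prescribed-MC} (via $a_{n_j}>t_{p-1}$ there, here via the minimality of the restart), and then absorb the difference between $M^2(a_{n_j})$ and $a_{n_j}^{1+\eps}$ using that $r_{n_j}$ is a restart value and hence bounded by the ``small'' option $S^{1+C\delta(S)}$ whenever the small option is the one covered — and arranging, by choosing $R_0$ large, that even the ``large'' option $T$ satisfies $T\le a_{n_j}^{1+\eps}$.

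The last point is in fact the \textbf{main obstacle}: I must ensure that \emph{whichever} of the two options is covered, the resulting restart radius $r_{n_j}$ (and its small power $r_{n_j}^{1+20\delta(r_{n_j})}$) is at most $a_{n_j}^{1+\eps}$. The ``small'' option poses no difficulty ($S=M(r_{N(j)})$, and one shows $S\le M^2(a_{n_j-?})\le a_{n_j}^{1+\eps/2}$ for $R_0$ large using Lemma~\ref{Had}(a)), but the ``large'' option $T=S^{1+C\delta(S)}$ is a genuine power of $M(r_{N(j)})$ and could a priori overshoot $a_{n_j}^{1+\eps}$. The resolution is that one only ever \emph{needs} the large option as a target for the \emph{earlier} restart annuli (the analogue of the second application of \eqref{BN-cover} in Theorem~\ref{annuli-noMC}, used to guarantee each $B_{N(j)}$ covers all previous $B_n$), not as the actual restart value along the sequence $(n_j)$; for the restart itself one always has the option of staying with $S$ unless $f(B_{N(j)})$ fails to cover $\overline A(S,S')$, in which case it covers $\overline A(T,T')$ — and then one shows directly that this forced ``large'' value still satisfies $T\le M^2(a_{n_j})\le a_{n_j}^{1+\eps}$ because $S=M(r_{N(j)})$ is already comparable to $a_{N(j)+1}$ and $C\delta(S)\to 0$. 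I would also need to handle the uncountability statement under hypothesis \eqref{notfast-noMC} in exactly the same way as at the end of the proof of Theorem~\ref{prescribed-MC}: \eqref{notfast-noMC} together with \eqref{aprops-noMC} gives $a_{n+\ell}<M^n(R_0)$ for $n\ge n(\ell)$, which forces infinitely many ``free'' restarts where the minimal legal choice of the next $B$-index is strictly less than the ``keep going'' index $m+1$, and at each such restart one has a genuine binary choice (take the minimal restart, or take $B_{m+1}$); the set of itineraries so obtained forms the branches of an infinite binary tree and hence is uncountable.
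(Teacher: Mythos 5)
Your high-level strategy---apply Lemma~\ref{annuli-N} directly, restart in blocks, and exploit the binary choice between the $\overline A(S,S')$ and $\overline A(T,T')$ targets---is the same as the paper's, and your handling of the first block and the lower bounds $r_n\ge a_n$ is essentially correct. But your choice of the restart scale $S$ is wrong, and this is not a cosmetic issue: it is exactly where the sharp estimate $a_{n_j}^{1+\eps}$ must come from.

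You set $S=M(r_{N(j)})$ (mimicking the proof of Theorem~\ref{annuli-noMC}), and then try to close the gap by claiming $S\le M^2(a_{n_j-?})\le a_{n_j}^{1+\eps/2}$ for $R_0$ large ``using Lemma~\ref{Had}(a)''. This inequality is false, and Lemma~\ref{Had}(a) says precisely the opposite: since $\log M(r)/\log r\to\infty$, for \emph{any} fixed power $K$ one has $M(r)>r^K$ for all large $r$, and in particular $M^2(a)\gg a^{1+\eps}$ for large $a$. So no choice of $R_0$ can ``absorb the difference between $M^2(a_{n_j})$ and $a_{n_j}^{1+\eps}$''; the quantities are separated by more than any polynomial gap. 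With your choice of $S$, the argument only recovers the $M^2(a_{n_j})$ bound of Theorem~\ref{prescribed}, not the strengthened bound $a_{n_j}^{1+\eps}$ of this theorem.

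The fix (and the paper's choice) is to take $S=a_{N(j)+1}$, $S'=S^{1+20\delta(S)}$, $T=S^{1+40\delta(S)}$, $T'=T^{1+20\delta(T)}$. Nothing in the covering condition \eqref{R-conds-1} forces $S$ to be as large as $M(r_{N(j)})$: one only needs $2<S<S'$, $S'\le\tfrac12 T$, and $T'\le M(\tfrac13 r_{N(j)}^{1+\delta_{N(j)}})$, and the last of these is verified precisely because $S=a_{N(j)+1}\le M(a_{N(j)})\le M(r_{N(j)})$. With this choice, whichever of the two target annuli is covered, the restart annulus lies in $\overline A(a_{N(j)+1},\,a_{N(j)+1}^{(1+40\delta(S))(1+20\delta(T))})\subset\overline A(a_{N(j)+1},\,a_{N(j)+1}^{1+\eps})$, once $R_0$ is chosen so that $\sqrt{\log r}\ge 100/\eps$ for $r\ge R_0$. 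The large alternative $R'\ge M(r_{N(j)})$ is then used only to generate the second branch in the binary-tree argument for uncountability (as a genuinely different restart, not as the one satisfying \eqref{presc-noMC}'s upper bound), which is the role you correctly sensed it should play; but your write-up has the two roles tangled together, and your ``small'' option is already at the wrong scale.
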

\begin{proof} First we choose $R_0=R_0(f,\eps)$ so large that $R_0\ge \max\{R_1,R_2,R_3\}$, where $R_1=R_1(f)$ is the constant in Lemma~\ref{Had}, $R_2=R_2(f)$ is the constant in Corollary~\ref{Bohr} and $R_3=R_3(f)$ is the constant in Lemma~\ref{annuli-N}, and also so that
\begin{equation}\label{R-eps}
M(r)> r^{10000}\;\;\text{and}\;\;\sqrt{\log r}\ge 100/\eps,\;\;\text{for } r\ge R_0.
\end{equation}

Suppose that $(a_n)$ is any sequence satisfying \eqref{aprops-noMC}. As in
the proof of Theorem~\ref{prescribed-MC}, the idea is to choose a suitable sequence of closed
annuli~$E_n$, related to the sequence $(a_n)$, and then apply Lemma~\ref{top}.

First define $r_0=a_1$. By Lemma~\ref{annuli-N}, there exists $N(1)\in\N$ and annuli
\[
A_n=A(r_n,r_n^{k_n}),\;\;n=0,\ldots, N(1),
\]
with the properties given in that lemma. In particular,
\[
r_{n+1}=M(r_n), \;\;\text{for }n=0,\ldots, N(1)-1,
\]
so, by \eqref{aprops-noMC},
\begin{equation}\label{rn-an}
r_n\ge a_n,\;\;\text{for }n=0,\ldots,N(1).
\end{equation} 

Also, with
\begin{equation}\label{En-def}
E_n=\overline A_n,\;\;\text{for } n=0,\ldots,N(1),
\end{equation}
we have
\begin{equation}\label{En-contains}
f(E_n)\supset E_{n+1},\;\;\text{for } n=0,\ldots, N(1)-1,
\end{equation}
and, for any $S,S',T,T'$ that satisfy
\begin{equation}\label{R-conds-3}
2<S<S',\;\;T<T'\le M(\tfrac13 r_{N(1)}^{1+\delta_{N(1)}})\;\;\text {and }\;S'\le \tfrac12T,
\end{equation}
we have
\begin{equation}\label{BN-cover-2}
f(E_{N(1)})\;\;\text{contains}\;\; \overline A(S,S')\;\;
\text{or}\;\; \overline A(T,T').
\end{equation}

We now apply the covering property \eqref{BN-cover-2} with
\begin{equation}\label{Rdef-2}
S=a_{N(1)+1}\;\;\text{and}\;\;S'=S^{1+20\delta(S)},
\end{equation}
and
\begin{equation}\label{tildeRdef-2}
T=S^{1+40\delta(S)}\;\;\text{and}\;\;T'=T^{1+20\delta(T)}.
\end{equation}
To see that this choice of $S, S',T, T'$ is possible first note that, by \eqref{aprops-noMC},~\eqref{R-eps} and \eqref{Rdef-2},
\[
\frac{T}{S'}=S^{20\delta(S)}=\exp\left(20\sqrt{\log
S}\right)\ge 2.
\]
Then, by using~\eqref{Rdef-2} and \eqref{tildeRdef-2}, together with the fact that
\[
S=a_{N(1)+1}\le M(a_{N(1)}) \le M(r_{N(1)}),
\]
which follows from \eqref{aprops-noMC} and \eqref{rn-an}, we deduce that
\begin{eqnarray*}
T'&=& S^{(1+40\delta(S))(1+20\delta(T))}\\
&\le & S^{(1+40\delta(S))(1+20\delta(S))}\\
&\le & M(r_{N(1)})^{\left(1+40\delta( M( r_{N(1)})\right)\left(1+20\delta(M(r_{N(1)})\right)}.
\end{eqnarray*}
The final inequality here holds because the function
\[
t\mapsto t^{\left(1+40/\sqrt{\log t}\right)\left(1+20/\sqrt{\log t}\right)}\;\;\text{is increasing on }[1,\infty).
\]
Since $\delta(M(r_{N(1)})\le \delta_{N(1)}/100\le 1/80$, by \eqref{R-eps}, we deduce
that
\[
\left(1+40\delta(M(r_{N(1)})\right)\left(1+20\delta(M(r_{N(1)})\right)\le
1+\tfrac45\delta_{N(1)},
\]
so
\[
T'\le M(r_{N(1)})^{1+\tfrac45\delta_{N(1)}}.
\]
As in the proof of \eqref{array-T'}, it now follows, by Lemma~\ref{Had} part~(b) and the fact that $\sqrt{\log r_{N(1)}}\ge 5\log 3$, that
\[
T'\le M(r_{N(1)})^{1+\tfrac45\delta_{N(1)}}\le
M(r_{N(1)}^{1+\tfrac45\delta_{N(1)}})\le M(\tfrac13 r_{N(1)}^{1+\delta_{N(1)}}).
\]
Thus \eqref{R-conds-3} holds with this choice of $S,S',T,T'$, so \eqref{BN-cover-2} does also. Moreover,
\begin{equation}\label{N(1)-est}
a_{N(1)+1}=S<T'=S^{(1+40\delta(S))(1+20\delta(T))}\le a_{N(1)+1}^{1+\eps},
\end{equation}
by \eqref{Rdef-2}, \eqref{tildeRdef-2}, \eqref{aprops-noMC} and \eqref{R-eps}.

Hence if we define the pair $(r_{N(1)+1},\delta_{N(1)+1})$ to be either $(S,\delta(S))$ or $(T,\delta(T))$, and put
\begin{equation}\label{E-next-1}
E_{N(1)+1}=\overline A(r_{N(1)+1},r_{N(1)+1}^{k_{N(1)+1}}),
\end{equation}
where
\[
k_{N(1)+1}=1+20\delta_{N(1)+1}=1+20\delta(r_{N(1)+1}),
\]
then, by \eqref{BN-cover-2},
\begin{equation}\label{E-next-2}
f(E_{N(1)})\supset E_{N(1)+1},
\end{equation}
and, by \eqref{N(1)-est},
\begin{equation}\label{En-2}
E_{N(1)+1}\subset \overline A(a_{N(1)+1},a_{N(1)+1}^{1+\eps}).
\end{equation}

Now, since $E_{N(1)+1}$ has the form in \eqref{E-next-1}, we can apply Lemma~\ref{annuli-N} with $r_0$ replaced by $r_{N(1)+1}$ to give $N(2)\in \N$ with $N(2)>N(1)$ and closed annuli
\[
E_n=\overline A(r_n,r_n^{k_n}),\;\; n=N(1)+1,\ldots, N(2),
\]
which satisfy
\[
r_n\ge a_n,\;\;\text{for }n=N(1)+1,\ldots,N(2),
\]
\[
f(E_n)\supset E_{n+1},\;\;\text{for }n=N(1)+1,\ldots, N(2)-1,
\]
\[
f(E_{N(2)})\supset \overline A(r_{N(2)+1},r_{N(2)+1}^{k_{N(2)+1}})=E_{N(2)+1},\;\;\text{say},
\]
where
\[
k_{N(2)+1}=1+20\delta_{N(2)+1}=1+20\delta(r_{N(2)+1})
\]
and
\[
E_{N(2)+1}\subset \overline A(a_{N(2)+1},a_{N(2)+1}^{1+\eps}).
\]
Repeating this application of Lemma~\ref{annuli-N} infinitely often, we obtain closed annuli
\[
E_n=\overline A(r_n,r_n^{k_n}),\;\;n\ge 0,
\]
such that
\begin{equation}\label{rn-an-2}
r_n\ge a_n,\;\;\text{for } n\ge 0,
\end{equation}
\begin{equation}\label{En-contains-2}
f(E_n)\supset E_{n+1},\;\;n\ge 0,
\end{equation}
and a strictly increasing sequence of positive integers $N(j)$, $j\in \N$, such that
\begin{equation}\label{En-subseq}
E_{N(j)+1}\subset \overline A(a_{N(j)+1},a_{N(j)+1}^{1+\eps}),\;\;\text{for }j\in\N.
\end{equation}
Since $f$ has no {\mconn} Fatou components, all the components of $J(f)$ are
unbounded, as noted earlier, so the closed annuli $E_n$ must all meet $J(f)$ by
\eqref{En-contains-2} and the fact that $J(f)$ is completely invariant under $f$.

By \eqref{En-contains-2} and Lemma~\ref{top}, there exists $\zeta$ with the property that
\[
f^n(\zeta)\in E_n\cap J(f),\;\;\text{for }n\ge 0.
\]
By \eqref{rn-an-2} and \eqref{En-subseq}, this $\zeta$ satisfies~\eqref{presc-noMC} with $n_j=N(j)+1$, $j\in\N$.

To prove the final part of Theorem~\ref{prescribed-noMC} we argue in a similar way to the proof of the final part of Theorem~\ref{prescribed-MC}. As in that proof, we deduce from \eqref{notfast-noMC} that for all $\ell\in\N$,
\begin{equation}\label{notfast-noMC-1}
a_{n+\ell}<M^n(R_0),\;\;\text{for }n\ge n(\ell).
\end{equation}
Next note that the covering property \eqref{BN-cover-2} can be used to show that, for any $j\in\N$,
\begin{equation}\label{forwards-1}
f(E_{N(j)})\supset\overline A(R',(R')^{k'}),
\end{equation}
where $R'$ and $k'$ satisfy
\begin{equation}\label{forwards-2}
R'\ge M(r_{N(j)})\;\;\text{and}\;\;k'=1+20\delta(R').
\end{equation}
The argument required here is similar to that used in the proof of Theorem~\ref{annuli-noMC} to show that $f(B_{N(j)})$ contains an annulus with these properties, and we omit the details.

Therefore in the above proof, for any $j\in\N$, instead of defining the annulus $E_{N(j)+1}$ by choosing the pair $(r_{N(j)+1},\delta_{N(j)+1})$ to be either $(S,\delta(S))$ or $(T,\delta(T))$, where the values of~$S$ and~$T$ are as given in \eqref{Rdef-2} and \eqref{tildeRdef-2} (with the suffix $N(1)+1$ replaced by $N(j)+1$), we can alternatively define
\begin{equation}\label{large-E}
E_{N(j)+1}=\overline A(r_{N(j)+1},r_{N(j)+1}^{k_{N(j)+1}}),
\end{equation}
where
\begin{equation}\label{large-r}
r_{N(j)+1}=R'\ge M(r_{N(j)})\;\;\text{and}\;\;k_{N(j)+1}=k'=1+20\delta(r_{N(j)+1}).
\end{equation}
Then, by \eqref{forwards-1} and \eqref{forwards-2}, we have $f(E_{N(j)})\supset E_{N(j)+1}$, and we can then continue the construction as before to obtain the itinerary of a point $\zeta$ that satisfies \eqref{presc-noMC}.

In view of \eqref{notfast-noMC-1}, as well as \eqref{large-E}, \eqref{large-r} and \eqref{aprops-noMC}, there must be infinitely many values of~$j$ for which these two possible choices of $E_{N(j)+1}$ lie in different elements of the partition $\{A_n(R_0)\}$. If we now consider all possible ways of choosing $E_{N(j)+1}$ that correspond to a point~$\zeta$ satisfying \eqref{presc-noMC}, then the itineraries with respect to $(A_n(R_0))$ that arise form the branches of an infinite binary tree and so are uncountable. This completes the proof of Theorem~\ref{prescribed-noMC}.
\end{proof}

\end{document}